\def\be1{{\begin{equation}}}
\def\ee1{{\end{equation}}}
\def\part{\partial}
\newtheorem{them}{Theorem}[section]
\newtheorem{corollary}{Corollary}[section]
\newtheorem{definition}[them]{Definition}
\numberwithin{equation}{section}
\newtheorem{lemma}{Lemma}[section]
\newtheorem{proposition}[lemma]{Proposition}
\newtheorem{theorem}[lemma]{Theorem}
\title{Stabilities of homothetically shrinking Yang-Mills solitons}
\author{Zhengxiang Chen}
\address{ Albert-Ludwigs-Universit\"at Freiburg, Mathematisches Institut, Eckerstr. 1, 79104 Freiburg, Germany;
New Address: Institute of Mathematics, Academy of Mathematics and Systems Science,
Chinese Academy of Sciences, Beijing 100190, China}
\email{zx.chen@amss.ac.cn}
\author{Yongbing Zhang}
\address{School of Mathematical Sciences and Wu Wen-Tsun Key Laboratory of Mathematics,
University of Science and Technology of China, Hefei 230026, Anhui Province, China}
\email{ybzhang@amss.ac.cn}
\thanks{The project is supported by NSFC No. 11201448}
\subjclass[2010]{53C44, 53C07}
\keywords{Yang-Mills flow, stability, homothetically shrinking soliton}
\begin{document}
\maketitle
\begin{abstract}
In this paper we introduce  entropy-stability and F-stability for homothetically shrinking Yang-Mills solitons,
employing entropy and second variation of $\mathcal{F}$-functional respectively.
For a homothetically shrinking soliton which does not descend,
we prove that entropy-stability implies F-stability.
These stabilities have connections with the study of Type-I singularities of the Yang-Mills flow.
Two byproducts are also included:
We show that the Yang-Mills flow in dimension four cannot develop a Type-I singularity;
and we obtain a gap theorem for homothetically shrinking solitons.
\end{abstract}

\section{Introduction}

In this paper we introduce entropy-stability and F-stability for homothetically shrinking (Yang-Mills) solitons.
Let $E$ be a trivial $G$-vector bundle over $\mathbb{R}^n$ and of rank $r$.
Here the gauge group $G$ is a Lie subgroup of $SO(r)$.
A homothetically shrinking soliton, centered at the space-time point $(x_0=0, t_0=1)$, is a connection $A(x)$ on $E$ such that
\begin{equation}\nonumber
(d^\nabla)^*F+\frac{1}{2}i_{x}F=0,
\end{equation}
where $F$ is the curvature of $A(x)$, $(d^\nabla)^*$ denotes the formal adjoint of the covariant exterior differentiation $d^\nabla$,
and $i_x$ stands for the interior product by the position vector $x$.

A homothetically shrinking soliton $A(x)$ gives rise to a special solution of the Yang-Mills flow.
In fact in the exponential gauge of $A(x)$, the following
$$A(x,t)=A_j(x,t)dx^j:=(1-t)^{-\frac{1}{2}}A_j((1-t)^{-\frac{1}{2}}x)dx^j$$
is a solution to the Yang-Mills flow. On the other hand, homothetically shrinking solitons are closely related to Type-I singularities
of the Yang-Mills flow. Weinkove \cite{Weinkove} proved that Type-I singularities of the Yang-Mills flow are modelled by
homothetically shrinking solitons whose curvatures do not vanish identically.
Examples of homothetically shrinking solitons have been found in \cite{Gastel,Weinkove}.
In this paper, we restrict ourselves to homothetically shrinking solitons which have uniform bounds on $|\nabla^kA(x)|$ for each $k\geq 1$.
In fact, Weinkove showed in \cite{Weinkove} that Type-I singularities of the Yang-Mills flow can be modelled by such solitons.

Recently, Colding and Minicozzi \cite{ColdingMinicozzi} discovered two functionals for immersed surfaces in Euclidean space,
i.e. the $\mathcal{F}$-functional and the entropy. Critical points of both functionals are self-shrinkers of the mean curvature flow.
Colding and Minicozzi introduced entropy-stability and F-stability for self-shrinkers.
Inspired by their work, in this paper we aim to introduce corresponding stabilities for homothetically shrinking Yang-Mills solitons.
In fact there are many aspects in common concerning the entropy-stability and F-stability for self-similar solutions to various geometric flows,
which includes mean curvature flow, Ricci flow, harmonic map heat flow, and Yang-Mills flow.
For the entropy-stability and linearly stability of Ricci solitons, see for instance \cite{CaoHamiltonIlmanen, CaoZhu};
for the entropy-stability and F-stability of self-similar solutions to the harmonic map heat flow see \cite{Zhang}.

We begin with the definition of $\mathcal{F}$-functional.
Let $x_0$ be a point in $\mathbb{R}^n$ and $t_0$ a positive number.
The $\mathcal{F}$-functional with respect to $(x_0,t_0)$, defined on the space of connections on $E$, is given by
\begin{equation}\label{Ffunctional0}
\mathcal{F}_{x_0,t_0}(A)=t_0^2\int_{\mathbb{R}^n}|F|^2(4\pi t_0)^{-\frac{n}{2}}e^{-\frac{|x-x_0|^2}{4t_0}}dx.
\end{equation}

The functional $\mathcal{F}_{x_0,t_0}$ can trace back to the monotonicity formula of the Yang-Mills flow.
For the monotonicity formula see \cite{ChenShen, Hamilton, Naito}.
Let $A(x,t)$ be a solution to the Yang-Mills flow on $E$ and
\begin{equation}\nonumber
\Phi_{x_0,t_0}(A(x,t))=(t_0-t)^2\int_{\mathbb{R}^n}|F|^2[4\pi (t_0-t)]^{-\frac{n}{2}}e^{-\frac{|x-x_0|^2}{4(t_0-t)}}dx.
\end{equation}
Along the Yang-Mills flow, $\Phi_{x_0,t_0}$ is non-increasing in $t$.
Moreover $\Phi_{x_0,t_0}$ is preserved if and only if $A(x,0)$ is a homothetically shrinking soliton centered at $(x_0,t_0)$.
Here a homothetically shrinking soliton centered at $(x_0,t_0)$ is a connection on $E$ satisfying the equation
\begin{equation}\label{homotheticallyshrinker0}
(d^\nabla)^*F+\frac{1}{2t_0}i_{x-x_0}F=0.
\end{equation}

The $\mathcal{F}$-functional leads to another characterization of homothetically shrinking solitons:
Critical points of $\mathcal{F}_{x_0,t_0}$ are exactly homothetically shrinking solitons centered at $(x_0,t_0)$;
moreover, $(x_0,t_0, A_0)$ is a critical point of the function $(x,t,A)\mapsto \mathcal{F}_{x,t}(A)$
if and only if $A_0$ is a homothetically shrinking soliton centered at $(x_0,t_0)$.

The $\lambda$-entropy of a connection $A(x)$ on the bundle $E$ is defined by
\begin{equation}\label{entropy0}
\lambda(A)=\sup_{x_0\in \mathbb{R}^n,t_0>0}\mathcal{F}_{x_0,t_0}(A).
\end{equation}
A crucial fact is the following
\begin{proposition}
Let $A(x,t)$ be a solution to the Yang-Mills flow on the bundle $E$.
Then the entropy $\lambda(A(x,t))$ is non-increasing in $t$.
\end{proposition}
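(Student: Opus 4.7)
The plan is to reduce the monotonicity of the entropy $\lambda$ to the already-stated monotonicity of Hamilton's quantity $\Phi_{x_0,t_0}$ along the Yang-Mills flow, by a time-translation argument that is essentially the Yang-Mills analogue of the Colding-Minicozzi entropy monotonicity for mean curvature flow.

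The key observation is the following elementary identity relating the two functionals. Comparing
$$
\mathcal{F}_{x_0,s}(A(\cdot,t))=s^2\int_{\mathbb{R}^n}|F(\cdot,t)|^2(4\pi s)^{-\frac{n}{2}}e^{-\frac{|x-x_0|^2}{4s}}\,dx
$$
with the definition of $\Phi$, one sees immediately that
$$
\mathcal{F}_{x_0,s}(A(\cdot,t))=\Phi_{x_0,\,t+s}(A(\cdot,t))
$$
for every $s>0$, $x_0\in\mathbb{R}^n$ and every time $t$ at which the flow is defined. In other words, evaluating $\mathcal{F}$ at the time slice $t$ corresponds to evaluating $\Phi$ with the base time $t_0$ shifted by $s$.

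Given this, fix $t_1<t_2$ and pick any $(x_0,s)\in\mathbb{R}^n\times(0,\infty)$. Apply the monotonicity of $\Phi_{x_0,t_0}$ with the choice $t_0=t_2+s$, which lies strictly above $t_2>t_1$, so both time slices are in the legal range; this gives
$$
\Phi_{x_0,\,t_2+s}(A(\cdot,t_2))\;\le\;\Phi_{x_0,\,t_2+s}(A(\cdot,t_1)).
$$
Translating both sides back through the identity above, the left-hand side is exactly $\mathcal{F}_{x_0,s}(A(\cdot,t_2))$, while the right-hand side is $\mathcal{F}_{x_0,\,s+(t_2-t_1)}(A(\cdot,t_1))$, which is bounded above by $\lambda(A(\cdot,t_1))$ by definition of the entropy. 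Taking the supremum of the resulting inequality over $(x_0,s)\in\mathbb{R}^n\times(0,\infty)$ on the left yields $\lambda(A(\cdot,t_2))\le\lambda(A(\cdot,t_1))$.

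There is no real analytic obstacle: all the work is already packed into the monotonicity of $\Phi$. The only point worth being careful about is that the supremum in the definition of $\lambda$ ranges over all $(x_0,s)$ with $s>0$, and one must check that the shifted parameter $s+(t_2-t_1)$ used to bound the supremum on the right is still an admissible parameter in the supremum defining $\lambda(A(\cdot,t_1))$, which it obviously is. If one wants $\lambda$ to be finite to begin with (so that the inequality has content), one can either impose this as a running hypothesis, or use the uniform derivative bounds on the shrinker together with the Gaussian weight to see $\mathcal{F}_{x_0,s}$ is finite, but no additional analytic input is required for the monotonicity itself.
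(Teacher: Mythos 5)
Your proof is correct and is essentially identical to the paper's argument: both hinge on the identity $\mathcal{F}_{x_0,s}(A(\cdot,t))=\Phi_{x_0,\,t+s}(A(\cdot,t))$ and the monotonicity of $\Phi$, followed by absorbing the time-shifted parameter into the supremum defining the entropy at $t_1$. The only cosmetic difference is that the paper reaches the final inequality by first picking an $\epsilon$-approximate maximizer of $\mathcal{F}$ at time $t_2$ rather than taking the supremum at the end, which is logically the same.
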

The entropy is a rescaling invariant. More precisely, let $A(x)$ be a connection on $E$ and $A^c$ a rescaling of $A(x)$ given by
$A_i^c(x)=c^{-1}A_i(c^{-1}x)$, $c>0$.
Then
$\mathcal{F}_{cx_0,c^2t_0}(A^c)=\mathcal{F}_{x_0,t_0}(A)$
and hence $\lambda(A^c)=\lambda(A)$.
In particular the entropy of each time-slice of the homothetically shrinking Yang-Mills flow, induced from a homothetically shrinking soliton,
is preserved. The entropy is also invariant under translations of a connection. Let $A(x)$ be a connection on $E$,
$x_1\in \mathbb{R}^n$ a given point, and $\widetilde{A}_i(x)=A_i(x+x_1)$.
Then we have $\mathcal{F}_{x_0-x_1,t_0}(\widetilde{A})=\mathcal{F}_{x_0,t_0}(A)$ and hence $\lambda(\widetilde{A})=\lambda(A)$.

In general the entropy $\lambda(A)$ of a connection $A(x)$ is not attained by any $\mathcal{F}_{x_0,t_0}(A)$.
However if $A(x)$ is a homothetically shrinking soliton centered at $(x_0,t_0)$,
then $\lambda(A)=\mathcal{F}_{x_0,t_0}(A)$. In fact we prove the following

\begin{proposition}\label{entropyobtained0}
Let $A(x)$ be a homothetically shrinking soliton centered at $(0,1)$ such that $i_VF\neq 0$ for any non-zero $V\in \mathbb{R}^n$.
Then the function $(x_0,t_0)\mapsto F_{x_0,t_0}(A)$ attains its strict maximum at $(0,1)$.
\end{proposition}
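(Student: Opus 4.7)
The plan is to apply the monotonicity formula to the associated self-similar Yang-Mills flow $A(x,t)=(1-t)^{-1/2}A_j((1-t)^{-1/2}x)\,dx^j$ and then analyze the equality case. Fix $(x_0,t_0)$ with $t_0>0$. Since the time-$t$ slice of the flow is the rescaling $A^c$ with $c=\sqrt{1-t}$, the rescaling invariance of $\mathcal{F}$ gives
\[
\Phi_{x_0,t_0}(A(\cdot,t))=\mathcal{F}_{x_0/\sqrt{1-t},\,(t_0-t)/(1-t)}(A),\qquad t<\min(t_0,1).
\]
By the Yang-Mills monotonicity formula the left-hand side is nonincreasing in $t$; at $t=0$ it equals $\mathcal{F}_{x_0,t_0}(A)$, and as $t\to-\infty$ the parameters tend to $(0,1)$, so (using the uniform bound on $|F|$ and dominated convergence to justify continuity of $\mathcal{F}$) the quantity tends to $\mathcal{F}_{0,1}(A)$. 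This yields the weak inequality $\mathcal{F}_{x_0,t_0}(A)\le\mathcal{F}_{0,1}(A)$.

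For strictness I argue by contradiction. If equality held for some $(x_0,t_0)\neq(0,1)$, then $\Phi_{x_0,t_0}(A(\cdot,t))$ would be constant on $(-\infty,0]$, so the rigidity part of the monotonicity formula would force $A(\cdot,t)$ to be a homothetically shrinking soliton centered at $(x_0,t_0)$ for every $t\le 0$. Since $A(\cdot,t)$ is simultaneously a rescaling of the original shrinker, it is also a soliton centered at $(0,1-t)$. Subtracting the two soliton equations at time $t$ and rewriting via $F(x,t)=(1-t)^{-1}F(y)$ with $y=(1-t)^{-1/2}x$ produces
\[
(1-t_0)(1-t)^{-1/2}\,i_yF(y)=i_{x_0}F(y)\quad\text{for all }y\in\mathbb{R}^n\text{ and }t\le 0.
\]

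The right-hand side is independent of $t$, so varying $t$ leaves only two alternatives: either $t_0=1$, in which case $i_{x_0}F\equiv 0$; or $t_0\neq 1$, in which case $i_yF(y)=0$ for all $y$ and $i_{x_0}F\equiv 0$. Whenever $x_0\neq 0$, the identity $i_{x_0}F\equiv 0$ directly contradicts the non-degeneracy hypothesis $i_VF\not\equiv 0$ for any nonzero constant $V$. The only remaining case is $x_0=0$, $t_0\neq 1$, where the equation reduces to $i_xF(x)\equiv 0$; combined with the soliton equation at $(0,1)$ this gives $(d^\nabla)^*F=0$, so $A$ is Yang-Mills. Passing to a radial gauge at the origin, $A_r=0$ together with $F_{rj}=\partial_rA_j=0$ force $A=A_a(\omega)\,d\omega^a$ with $A_a$ independent of $r$, so the flat metric identity $g^{ab}=r^{-2}\hat g^{ab}(\omega)$ yields $|F|^2(r,\omega)=r^{-4}|\hat F(\omega)|_{\hat g}^2$; the uniform bound on $|F|$ then forces $|\hat F(\omega)|\le Cr^2\to 0$ as $r\to 0$, so $F\equiv 0$, contradicting the hypothesis.

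The main obstacle is precisely this last sub-case $x_0=0$, $t_0\neq 1$: subtracting the two soliton equations only yields a pointwise constraint along radial directions rather than an identity $i_VF\equiv 0$ for a constant $V$, and one must exploit both the radial-gauge structure on $\mathbb{R}^n$ and the uniform bound on $|F|$ (to turn the scaling $|F|^2\sim r^{-4}$ into the vanishing of $\hat F$) to close the argument.
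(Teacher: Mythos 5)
Your proof is correct, but it takes a genuinely different route from the paper's. The paper argues directly on the soliton $A$ itself: it first shows $(0,1)$ is a strict local maximum by computing $\mathcal{F}'_{0,1}=0$ and $\mathcal{F}''_{0,1}(q,V,0)=-2\int (q^2|J|^2+\tfrac12|i_VF|^2)G\,dx<0$, and then, along the explicit path $(x_s,t_s)=(sy,1+(T-1)s^2)$, computes $g'(s)=-s\int|(T-1)sX_j+t_s y^iF_{ij}|^2G_s\,dx\le 0$ using the integral identities of Lemma 3.2. You instead pass to the associated self-similar Yang--Mills flow, rewrite $\Phi_{x_0,t_0}(A(\cdot,t))$ as $\mathcal{F}$ evaluated at rescaled parameters, invoke the parabolic monotonicity formula to get $\mathcal{F}_{x_0,t_0}(A)\le\mathcal{F}_{0,1}(A)$, and then use the rigidity in the monotonicity to pin down the equality case. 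Your subtraction of the two soliton equations for the time-$t$ slice, leading to $(1-t_0)(1-t)^{-1/2}i_yF(y)=i_{x_0}F(y)$, and the resulting trichotomy on $(x_0,t_0)$ are correct; so is the conclusion $i_VF\equiv0$ (contradiction) when $x_0\ne0$.

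Two remarks. First, the one genuine deficit of your route relative to the paper's is that the paper's computation of $g'(s)$ as an explicit nonpositive integral also yields the quantitative version of the statement (equation (6.3) of Section 6: a uniform gap $\delta$ outside any $\epsilon$-neighborhood of $(0,1)$), which is what is actually used in the proof that entropy-stability implies F-stability. Your rigidity argument gives strict inequality pointwise but not the uniform gap, so it proves the statement as quoted but not the stronger variant used downstream. Second, in your final sub-case $x_0=0$, $t_0\ne1$, the radial-gauge scaling argument ($|F|^2=r^{-4}|\hat F|_{\hat g}^2$ plus boundedness) works, but there is a shorter route already in the paper: once $J=0$, the connection satisfies the soliton equation centered at $(0,t_0)$ for \emph{every} $t_0>0$, so Lemma 3.2(a) holds for all $t_0$; this forces $\int|F|^2 G_{0,t_0}\,dx = Ct_0^{-2}$, and letting $t_0\to0$ (an approximate identity) shows $C=0$, hence $F\equiv0$. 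This is precisely the argument used at the end of Corollary 5.4 and avoids the coordinate-singularity care needed in the radial gauge.
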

Note that if $i_VF=0$ for some non-zero vector $V$, then $A(x)$ can be viewed as a connection on a $G$-vector bundle over any hyperplane perpendicular
to $V$ and we say $A(x)$ descends (to $V^\perp$).

Entropy-stability and F-stability are defined for homothetically shrinking solitons.
\begin{definition}
A homothetically shrinking soliton $A(x)$ is called entropy-stable if it is a local minimum of the entropy, among
all perturbations $\widetilde{A}(x)$ such that $||\widetilde{A}-A||_{C^1}$ is sufficiently small.
\end{definition}
Entropy-stability of homothetically shrinking solitons has direct connections with Type-I singularities of the Yang-Mills flow.
For example, given an entropy-unstable homothetically shrinking soliton $A(x)$, by definition
we can find a perturbation $\widetilde{A}(x)$ of $A(x)$ such that  $||\widetilde{A}-A||_{C^1}$ is arbitrarily small and has less entropy.
Then by comparing the entropy, the Yang-Mills flow starting from $\widetilde{A}$ cannot converge back to a rescaling of $A(x)$.
Moreover, the Yang-Mills flow cannot develop a Type-I singularity modelled by $A(x)$, due to the fact that the entropy is a rescaling invariant.

Let $A_0(x)$ be a homothetically shrinking soliton centered at $(x_0,t_0)$.
For a $1$-parameter family of deformations $(x_s, t_s, A_s)$ of $(x_0,t_0,A_0)$, let
$V=\frac{dx_s}{ds}|_{s=0}, q=\frac{dt_s}{ds}|_{s=0}, \theta=\frac{dA_s}{ds}|_{s=0}$.
\begin{definition} $A_0(x)$ is called F-stable
if for any compactly supported $\theta$,
there exist a real number $q$ and a vector $V$ such that
$$\mathcal{F}_{x_0,t_0}''(q,V,\theta):=\frac{d^2}{ds^2}|_{s=0}\mathcal{F}_{x_s,t_s}(A_s)\geq 0.$$
\end{definition}

Entropy-stability has an apparent connection with the singular behavior of the Yang-Mills flow;
however the F-stability is more practical when we are trying to do classification.
The classification of entropy-stable homothetically shrinking solitons can be relied
on the classification of F-stable ones. In fact we have the following relation for entropy-stability and
F-stability.

\begin{theorem}\label{entropyimplyF}
Let $A(x)$ be a homothetically shrinking soliton such that $i_VF\neq 0$ for any non-zero $V\in \mathbb{R}^n$.
If $A(x)$ is entropy-stable, then it is F-stable.
\end{theorem}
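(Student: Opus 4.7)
The plan is to argue by contradiction: assume $A$ is F-unstable, so there is a compactly supported variation $\theta$ with $\mathcal{F}''_{0,1}(q, V, \theta) < 0$ for every $(q, V) \in \mathbb{R} \times \mathbb{R}^n$. The goal is to produce a $C^1$-small perturbation of $A$ with strictly smaller entropy, contradicting the entropy-stability hypothesis.

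The key object to analyze is the smooth function
$$\Phi(x, t, s) := \mathcal{F}_{x, t}(A + s\theta)$$
in a neighborhood of $(0, 1, 0)$. Since $A$ is a homothetically shrinking soliton centered at $(0, 1)$, the point $(0, 1, 0)$ is a joint critical point of $\Phi$, and by Proposition \ref{entropyobtained0} the slice $(x, t) \mapsto \Phi(x, t, 0)$ has a strict maximum at $(0, 1)$. First I would upgrade this to non-degeneracy by explicitly computing the $(x, t)$-Hessian $H$ at $(0, 1, 0)$. Using the soliton equation and integration by parts against the Gaussian weight $\rho = (4\pi)^{-n/2}e^{-|x|^2/4}$, one expects the spatial block to reduce to $H_{x_0}(V, V) = -\tfrac{1}{2}\int |i_V F|^2\,\rho\,dx$, and an analogous formula for the $t_0$-block together with the mixed partials; the non-descending hypothesis $i_V F \neq 0$ then forces $H$ to be negative definite.

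Granted $H$ is non-degenerate, the implicit function theorem provides a smooth family $(x_s, t_s)$ with $(x_0, t_0) = (0, 1)$ such that $(x_s, t_s)$ is a local maximum of $\Phi(\cdot, \cdot, s)$. Set $f(s) := \Phi(x_s, t_s, s)$. Since $\partial_x \Phi = \partial_t \Phi = 0$ at $(x_s, t_s, s)$ and $A$ is a critical point of $\mathcal{F}_{0, 1}$, an elementary chain-rule computation gives $f'(0) = 0$ and
$$f''(0) = \mathcal{F}''_{0, 1}(\dot{t}_0, \dot{x}_0, \theta),$$
with $(\dot{x}_0, \dot{t}_0)$ determined by linearizing the critical-point equations for $\Phi$. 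By the F-instability assumption this value is strictly negative, so $f(s) < f(0) = \mathcal{F}_{0,1}(A) = \lambda(A)$ for all small $s \neq 0$.

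To convert $f(s) < \lambda(A)$ into $\lambda(A + s\theta) < \lambda(A)$ I must show that the supremum defining $\lambda(A + s\theta)$ is actually attained at $(x_s, t_s)$ when $s$ is small. Since $\theta$ has compact support and $A$ has uniform bounds on all $|\nabla^k A|$, $\mathcal{F}_{x, t}(A + s\theta) - \mathcal{F}_{x, t}(A)$ tends to zero uniformly as $(x, t)$ leaves a bounded set in parameter space; combined with the strict maximum in Proposition \ref{entropyobtained0}, which forces $\mathcal{F}_{x, t}(A)$ to stay bounded away from $\lambda(A)$ outside any neighborhood of $(0, 1)$, this localizes the global supremum of $\Phi(\cdot, \cdot, s)$ near $(0, 1)$ for small $s$, where it must coincide with the unique local-maximum value $f(s)$. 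The main obstacles are exactly the two technical steps just indicated: establishing the negative definiteness of $H$ via a Bochner-type integration by parts that activates the non-descending hypothesis, and the uniform decay estimates on $\mathcal{F}_{x, t}$ away from $(0, 1)$ that are needed to localize the entropy supremum.
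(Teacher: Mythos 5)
Your proposal is correct and follows essentially the same strategy as the paper's own proof: introduce $H(y,T,s)=\mathcal{F}_{y,T}(A_s)$, establish a strict local maximum at $(0,1,0)$, and then localize the entropy supremum near $(0,1)$ using the compact support of $\theta$ together with Proposition \ref{entropyobtained0}. The only real difference in technique is at the local step: the paper verifies directly that the full Hessian of $H$ at $(0,1,0)$ is negative definite (using F-instability for directions with an $s$-component and $i_VF\neq 0$, $J\neq 0$ for the pure $(y,T)$-directions), whereas you show negative definiteness only of the $(y,T)$-block, invoke the implicit function theorem to track the nearby critical points $(x_s,t_s)$, and compute $f''(0)=\mathcal{F}''_{0,1}(\dot t_0,\dot x_0,\theta)<0$ — a Schur-complement reduction that yields the same conclusion. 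Two cautions on your localization sketch: the claim that $\mathcal{F}_{x_0,t_0}(A+s\theta)-\mathcal{F}_{x_0,t_0}(A)$ tends to zero uniformly needs separate treatment as $t_0\to 0$, since the prefactor $t_0^2(4\pi t_0)^{-n/2}$ blows up for $n>4$; the paper handles that regime by the crude bound $\mathcal{F}_{x_0,t_0}(A_s)\le Mt_0^2$. And you still need a compactness/continuity argument on the intermediate annular region (bounded away from $(0,1)$ but not escaping to infinity), where neither the local analysis nor the decay-at-infinity estimate applies — this is the paper's Step 4 and should be made explicit.
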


Let $A_0(x)$ be a homothetically shrinking soliton centered at $(0,1)$. Denote
\begin{equation}\label{Jacobioperator0}
L\theta=-[(d^\nabla)^*d^\nabla\theta+\mathcal{R}(\theta)+i_{\frac{x}{2}}d^\nabla\theta],
\end{equation}
where $\mathcal{R}(\theta)(\partial_j):=[F_{ij},\theta_i]$. For the homothetically shrinking soliton $A_0(x)$, we have
\begin{equation} \label{J0}
L (d^\nabla)^*F=(d^\nabla)^*F
\end{equation}
and
$$Li_VF=\frac{1}{2}i_VF, \quad \forall V\in \mathbb{R}^n.$$
The second variation of the $\mathcal{F}$-functional and at $A_0$ is given by
\begin{equation}\label{secondvariationformula0}
\frac{1}{2}\mathcal{F}_{0,1}''(q,V,\theta)=\int_{\mathbb{R}^n}<-L\theta+2q(d^\nabla)^*F-i_VF, \theta>G dx
-\int_{\mathbb{R}^n}(q^2|(d^\nabla)^*F|^2+\frac{1}{2}|i_V F|^2)G dx,
\end{equation}
where $G(x)=(4\pi)^{-\frac{n}{2}}\exp(-\frac{|x|^2}{4})$.
Denote the space of $\theta$ satisfying $L\theta=-\lambda\theta$ by $E_\lambda$.
We have the following characterization for F-stability.
\begin{theorem}\label{FstabilityandL}
$A_0(x)$ is F-stable if and only if the following properties are satisfied
\begin{itemize}
\item $E_{-1} = \{ c(d^\nabla)^*F, \quad c\in \mathbb{R}\}  $;
\item $E_{- \frac{1}{2}} = \{ i_V F, \quad V \in \mathbb{R}^n \} $;
\item $E_{\lambda} = \{0\},  \mbox{ for any }  \lambda <0   \mbox{ and }  \lambda \neq -1,\, - \frac{1}{2} .$
\end{itemize}
\end{theorem}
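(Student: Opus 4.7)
The plan is to reformulate F-stability as a spectral inequality for $L$, then read off each of the three eigenspace statements.

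\textbf{Step 1 (Profiling $q$ and $V$).} For fixed $\theta\in C_c^\infty$, the second variation $\tfrac12\mathcal{F}''_{0,1}(q,V,\theta)$ in (\ref{secondvariationformula0}) is a concave quadratic in $(q,V)\in \mathbb{R}\times\mathbb{R}^n$. Completing the square in $q$ and in $V$ (with the $V$-Hessian given by the Gram matrix $Q_{ij} := \langle i_{e_i}F, i_{e_j}F\rangle_G$ under the Gaussian-weighted pairing $\langle\cdot,\cdot\rangle_G$), the maximum over $(q,V)$ equals
$$-\langle L\theta,\theta\rangle_G + \|\Pi_1 \theta\|_G^2 + \tfrac12\|\Pi_2\theta\|_G^2,$$
where $\Pi_1,\Pi_2$ denote the $\langle\cdot,\cdot\rangle_G$-orthogonal projections onto $\mathrm{span}\{(d^\nabla)^*F\}$ and $\mathrm{span}\{i_V F : V\in\mathbb{R}^n\}$ respectively (degenerate cases where $(d^\nabla)^*F$ or some $i_V F$ vanish are absorbed by letting $\Pi_i$ project onto the actual span). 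Hence F-stability is equivalent to nonnegativity of this expression for every $\theta\in C_c^\infty$.

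\textbf{Step 2 (Spectral theory of $L$).} The drift term $i_{x/2}d^\nabla$ in $L$ is exactly the one produced by integration by parts against the Gaussian weight $G$, so $L$ is formally self-adjoint with respect to $\langle\cdot,\cdot\rangle_G$. Regarding $L$ as a bounded perturbation of a vector-valued Ornstein--Uhlenbeck operator (allowed by the standing uniform bounds on $|\nabla^k A|$), one obtains a discrete spectrum and a complete orthonormal eigenbasis in the Gaussian-weighted $L^2$ space; in particular, eigenspaces $E_\lambda$ for distinct $\lambda$ are mutually $\langle\cdot,\cdot\rangle_G$-orthogonal.

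\textbf{Step 3 (Necessity).} Assume F-stability, and suppose for contradiction there is a nonzero $\theta_0\in E_\lambda$, $\lambda<0$, lying outside the claimed model space (i.e.\ $\theta_0\perp (d^\nabla)^*F$ if $\lambda=-1$, $\theta_0\perp i_V F$ for all $V$ if $\lambda=-\tfrac12$, with automatic orthogonality from Step 2 when $\lambda\neq -1,-\tfrac12$). Then $\Pi_1\theta_0 = \Pi_2\theta_0 = 0$. Applying the Step~1 inequality to cut-off approximations $\theta_R := \eta_R\theta_0$ and taking $R\to\infty$, the Gaussian weight forces the commutator terms from $[L,\eta_R]$ and the perturbations of $\Pi_i$ to vanish in the limit, leaving $\lambda\|\theta_0\|_G^2 \geq 0$—impossible. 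This rules out all ``extra'' eigenfunctions, establishing the three bullets (the containments $\supset$ follow from (\ref{J0}) and the formula $Li_VF = \tfrac12 i_VF$).

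\textbf{Step 4 (Sufficiency and main obstacle).} Assuming the eigenspace characterization, expand $\theta\in C_c^\infty$ in the $L$-eigenbasis as $\theta=\sum_\mu \theta_\mu$ with $L\theta_\mu=-\mu\theta_\mu$. By hypothesis $\theta_\mu=0$ for every $\mu<0$ with $\mu\neq -1,-\tfrac12$, while $\Pi_1\theta=\theta_{-1}$ and $\Pi_2\theta=\theta_{-1/2}$, whence
$$-\langle L\theta,\theta\rangle_G + \|\Pi_1\theta\|_G^2 + \tfrac12\|\Pi_2\theta\|_G^2 = \sum_{\mu\geq 0}\mu\,\|\theta_\mu\|_G^2 \geq 0,$$
as required. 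The main analytic hurdle is Step~2: establishing self-adjointness and spectral discreteness of $L$ in the Gaussian-weighted setting with sufficient regularity, on which both the eigenfunction expansion of Step 4 and the cut-off argument of Step 3 rely. Once that functional-analytic framework is in place, the remaining algebra is the routine Gaussian completing-the-square from Step 1.
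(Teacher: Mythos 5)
Your proposal follows essentially the same route as the paper. The paper's proof decomposes $\theta = a J + i_W F + \widetilde{\theta}$ with $\widetilde{\theta}$ $G$-orthogonal to $J$ and to the $i_V F$, computes
$$
\tfrac12 \mathcal{F}_{0,1}''(q,V,\theta) = -(a+q)^2\int |J|^2 G - \tfrac12\int |i_{V+W}F|^2 G + \int \langle -L\widetilde{\theta},\widetilde{\theta}\rangle G,
$$
and then sets $q=-a$, $V=-W$. Your "complete the square in $(q,V)$" is the same maneuver: your $\Pi_1\theta$ is $aJ$ and $\Pi_2\theta$ is $i_W F$, and $-\langle L\theta,\theta\rangle_G + \|\Pi_1\theta\|_G^2 + \tfrac12\|\Pi_2\theta\|_G^2 = \langle -L\widetilde{\theta},\widetilde{\theta}\rangle_G$ because $L$ is self-adjoint with $LJ=J$ and $L(i_WF)=\tfrac12 i_WF$, so $L$ preserves the orthogonal decomposition. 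Your Steps 3 and 4 spell out the translation between "$-L\geq 0$ on the orthocomplement of $J$ and $i_V F$" and the three eigenspace conditions, a step the paper compresses into "one has the equivalence."

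One substantive caveat about your Step 2. The operator $L = -\bigl[(d^\nabla)^*d^\nabla + \mathcal{R} + i_{x/2}\,d^\nabla\bigr]$ is \emph{not} elliptic: the principal symbol of $(d^\nabla)^*d^\nabla$ on $1$-forms, $\sigma(\xi)v_j = |\xi|^2 v_j - \xi_j \xi_i v_i$, kills $v=\xi\otimes\phi$, which reflects gauge invariance of $\mathcal{F}$. So $L$ cannot be treated directly as a bounded perturbation of a vector-valued Ornstein--Uhlenbeck operator, and discrete spectrum with a complete eigenbasis does not follow from that comparison alone; one would need gauge-fixing or to work modulo the pure-gauge directions. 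The paper itself does not supply this analysis either, so you are not missing a step the paper has, but the specific justification you offer for spectral completeness does not hold as stated and is indeed, as you flag, the analytic point requiring care.
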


Theorem \ref{FstabilityandL} amounts to say that $A_0(x)$ is F-stable if and only if $L$ is non-negative definite modulo
the vector space spanned by $(d^\nabla)^*F$ and $i_VF$.  This is actually the reflection of the invariance property
of the $\mathcal{F}$-functional and the entropy under rescalings and translations.
Since Colding-Minicozzi's work \cite{ColdingMinicozzi},
classification problem of F-stable self-shrinkers of the mean curvature flow has drawn much attention, see for instance
\cite{AndrewsLiWei, ArezzoSun, LeeLue, LiZhang}.

We have two simple byproducts regarding homothetically shrinking solitons.
We show the non-existence of homothetically shrinking solitons in dimensions four and lower, and a gap theorem.
Let $A(x)$ be a homothetically shrinking soliton centered at $(0,1)$. Then we have the identity
$$\int_{\mathbb{R}^n}|x|^2|F|^2G(x)dx=2(n-4)\int_{\mathbb{R}^n}|F|^2G(x) dx.$$
It immediately implies the following
\begin{proposition}\label{notypeI0}
When $n=2, 3$, or $4$, there exists no homothetically shrinking soliton
such that $|F|$ is uniformly bounded and not identically zero.
\end{proposition}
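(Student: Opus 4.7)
The plan is to argue by contradiction, using the stated integral identity purely through sign considerations. Suppose $A(x)$ is a homothetically shrinking soliton centered at $(0,1)$ in dimension $n\in\{2,3,4\}$ whose curvature $F$ is uniformly bounded; the goal is to deduce $F\equiv 0$.

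The uniform bound on $|F|$ guarantees finiteness of both integrals in the stated identity
\[
\int_{\mathbb{R}^n}|x|^2|F|^2 G(x)\,dx \;=\; 2(n-4)\int_{\mathbb{R}^n}|F|^2 G(x)\,dx,
\]
since the Gaussian weight $G$ integrates to $1$ and has all polynomial moments. The left-hand side is manifestly non-negative, while for $n\leq 4$ the factor $2(n-4)\leq 0$ forces the right-hand side to be non-positive. Therefore both sides must vanish.

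It then remains to promote this integral vanishing to pointwise vanishing of $F$. For $n=2,3$ the coefficient $2(n-4)$ is strictly negative, so the right-hand side being zero gives $\int |F|^2 G\,dx=0$; continuity of $F$ together with $G>0$ yields $F\equiv 0$. For $n=4$ the right-hand side is identically zero as an expression and carries no information on its own, but the previously-established vanishing of the left-hand side reads $\int|x|^2|F|^2 G\,dx=0$, which forces $F(x)=0$ for every $x\neq 0$, and hence $F\equiv 0$ by continuity at the origin. In all three cases this contradicts the hypothesis that $F\not\equiv 0$.

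No genuine obstacle arises once the identity is in hand; the argument reduces to a short positivity-and-continuity check with a mild case split at the borderline dimension $n=4$. The substantive content is in the identity itself, which one proves most cleanly by applying the vanishing first variation of $\mathcal{F}_{0,1}$ at the critical point $A$ to the one-parameter rescaling family $A_i^c(x):=c^{-1}A_i(c^{-1}x)$, whose dilation derivative at $c=1$ produces exactly $(n-4)\int|F|^2 G\,dx-\frac{1}{2}\int|x|^2|F|^2 G\,dx$; alternatively one can pair the shrinker equation $(d^\nabla)^*F+\frac{1}{2}i_xF=0$ with $i_xF$ and integrate by parts against the Gaussian, using the Bianchi identity $d^\nabla F=0$.
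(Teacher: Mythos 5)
Your proof is correct and takes essentially the same route as the paper: both invoke the integral identity $\int_{\mathbb{R}^n}|x|^2|F|^2G\,dx=2(n-4)\int_{\mathbb{R}^n}|F|^2G\,dx$ from Lemma 3.3(a) and conclude by a sign-and-positivity argument, which you helpfully spell out with the $n=4$ borderline case. Your remark on deriving the identity via the dilation derivative of $c\mapsto\mathcal{F}_{0,1}(A^c)$ at $c=1$ is an equivalent repackaging of the paper's integration-by-parts with $\varphi^p=(x-x_0)^p$.
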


R{\aa}de \cite{Rade} proved that the Yang-Mills flow, over a compact Riemannian manifold of dimension $n=2$ or $3$,
exists for all time and converges to a Yang-Mills connection.
However if the base manifold has dimension five or above, Naito \cite{Naito} showed that
the Yang-Mills flow can develop a singularity in finite time, see also \cite{Grotowski}.
It is unclear yet whether the Yang-Mills flow over a four-dimensional manifold develops a singularity in finite time.
For partial results in this dimension, see for instance \cite{Hong,Schlatter,Struwe}
{\sl , the remarkable monographs \cite{Feehan} and the references therein}.
Together with Weinkove's blowup analysis for Type-I singularities of the Yang-Mills flow,
Proposition \ref{notypeI0} shows that the Yang-Mills flow cannot develop a singularity of Type-I.
This was actually a known fact, see for instance \cite{Hamilton}.

Gap theorems for Yang-Mills connections over spheres was considered in \cite{BourguignonLawson}.
Gap theorems for various kinds of self-similar solutions have also been obtained, see for instance \cite{CaoLi, LeSesum, Zhang}.
By (\ref{J0}), we have the following gap result for homothetically shrinking solitons.

\begin{theorem}\label{gap0}
Let $A(x)$ be a homothetically shrinking soliton centered at $(0,1)$.
If $ |F|^2 < \frac{n}{2(n-1)} ,$
then $(E, A)$ is flat.
\end{theorem}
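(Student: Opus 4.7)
The plan is to establish a Bochner-type identity for $|F|^2$ under the drift Laplacian $\mathcal{L}:=\Delta-\tfrac12 x\cdot\nabla$ and then apply the maximum principle. Since the hypothesis provides uniform bounds on $|\nabla^k A|$, in particular $|F|$ is bounded on $\mathbb{R}^n$, so $|F|^2$ attains its supremum at some point $x_0\in\mathbb{R}^n$; at $x_0$ the elementary inequalities $\nabla|F|^2=0$ and $\Delta|F|^2\le 0$ give $\mathcal{L}|F|^2(x_0)\le 0$. The target is the pointwise identity
\[
\tfrac12\mathcal{L}|F|^2 \;=\; |F|^2 + \mathcal{Q}(F) + |\nabla F|^2,
\]
where $\mathcal{Q}(F)$ is a cubic expression in $F$ to be estimated by $\sqrt{2(n-1)/n}\,|F|^3$.

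For the identity, I would start from the general decomposition $\tfrac12\mathcal{L}|F|^2 = \tfrac12\langle(\Delta^\nabla-\tfrac{x^k}{2}\nabla_k)F,F\rangle+|\nabla F|^2$ and then compute the bracketed operator componentwise. Applying $\nabla^k$ to the shrinker equation $\nabla^k F_{kj}=\tfrac{x^k}{2}F_{kj}$, reordering second derivatives through the second Bianchi identity $\nabla_i F_{jk}+\nabla_j F_{ki}+\nabla_k F_{ij}=0$, and evaluating commutators via $[\nabla_i,\nabla_j]\theta=[F_{ij},\theta]$, a direct computation should yield
\[
(\Delta^\nabla F - \tfrac{x^k}{2}\nabla_k F)_{ij} \;=\; F_{ij} - 2\sum_k[F_{ik},F_{kj}].
\]
Contracting with $F_{ij}$ produces the Bochner identity with $\mathcal{Q}(F):=\sum_{i,j,k}\langle[F_{ij},F_{ik}],F_{jk}\rangle$. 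The same identity can alternatively be obtained from (J0): pair the eigenvector relation $Li_{e_k}F=\tfrac12 i_{e_k}F$ with $i_{e_k}F$, sum over $k$, and integrate against the Gaussian $G$, using the Bianchi consequence $(d^\nabla i_{e_k}F)_{ij}=\nabla_k F_{ij}$ to identify $\sum_k|d^\nabla i_{e_k}F|^2=|\nabla F|^2$ and $\sum_k|i_{e_k}F|^2=2|F|^2$.

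The decisive step, and the main obstacle, is the sharp pointwise cubic estimate
\[
|\mathcal{Q}(F)| \;\le\; \sqrt{\tfrac{2(n-1)}{n}}\,|F|^3,
\]
with precisely the constant matched to the gap hypothesis. The plan is to exploit the total antisymmetry of $\langle[\cdot,\cdot],\cdot\rangle$ on $\mathfrak{g}\subset\mathfrak{so}(r)$ together with $F_{ij}=-F_{ji}$ to rewrite $\mathcal{Q}(F)=6\sum_{i<j<k}\langle[F_{ij},F_{ik}],F_{jk}\rangle$, bound each summand by $|F_{ij}||F_{ik}||F_{jk}|$ using the bi-invariance of the inner product, and then sum over the $\binom{n}{3}$ triples via Cauchy--Schwarz and AM--GM so as to land on the exact constant $\sqrt{2(n-1)/n}$; verifying the correct combinatorial/algebraic accounting is where most of the work lies. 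Granting this estimate, evaluating the Bochner identity at $x_0$ yields $|F(x_0)|^2\le -\mathcal{Q}(F)(x_0)\le\sqrt{2(n-1)/n}\,|F(x_0)|^3$; combined with the hypothesis $|F|^2<n/(2(n-1))$, equivalently $|F|<1/\sqrt{2(n-1)/n}$, this forces $|F(x_0)|=0$, hence $\sup|F|=0$ and $F\equiv 0$, so $(E,A)$ is flat.
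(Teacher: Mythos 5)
Your approach is genuinely different from the paper's: you aim at a pointwise Bochner identity for $|F|^2$ under the drift Laplacian together with a maximum principle, whereas the paper pairs the eigenvalue relation $LJ=J$ with the single $1$-form $J=\nabla^pF_{pj}\,dx^j$, integrates against the Gaussian weight using the self-adjointness of $L$, and then estimates the resulting term $\langle\mathcal{R}(J),J\rangle$ pointwise. As written, your proposal has two genuine gaps.

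First, the maximum-principle step is unjustified. A bounded continuous function on the noncompact space $\mathbb{R}^n$ need not attain its supremum (e.g.\ $|x|^2/(1+|x|^2)$), so there may be no point $x_0$ at which $\nabla|F|^2=0$ and $\Delta|F|^2\le 0$. To make a pointwise argument work you would need an Omori--Yau type principle or a decay estimate on $|F|$ at infinity, neither of which you supply. The paper sidesteps this entirely by working with Gaussian-weighted $L^2$ pairings, for which the self-adjointness identity for $L$ is available with no pointwise maximality required; the conclusion $J\equiv 0$ comes from the sign of a weighted integral, and then Lemma 2.1(a) applied at every scale $t_0$ forces $F\equiv 0$.

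Second, the decisive cubic estimate $|\mathcal{Q}(F)|\le\sqrt{2(n-1)/n}\,|F|^3$ is not attainable by the route you describe, namely passing through $|\mathcal{Q}|\le 6\sum_{i<j<k}|F_{ij}||F_{ik}||F_{jk}|$ and then Cauchy--Schwarz/AM--GM. Already for $n=4$ with $|F_{ij}|=1$ for every $i<j$ one has $6\sum_{i<j<k}|F_{ij}||F_{ik}||F_{jk}|=24$, while $\sqrt{2(n-1)/n}\,|F|^3=\sqrt{3/2}\cdot 6\sqrt 6=18$, so the intermediate bound already overshoots the target constant. The constant $\sqrt{2(n-1)/n}$ in the paper is the sharp constant in the pointwise estimate $|\langle\mathcal{R}(\theta),\theta\rangle|\le\sqrt{2(n-1)/n}\,|F|\,|\theta|^2$ for a \emph{single} $1$-form $\theta$. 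Your Bochner route (equivalently, summing the eigenvalue-$\tfrac12$ relation $L\,i_{e_k}F=\tfrac12 i_{e_k}F$ over $k$ and pairing with $i_{e_k}F$) replaces $\theta$ by the $n$ one-forms $\theta^{(k)}=i_{e_k}F$ with $\sum_k|\theta^{(k)}|^2=2|F|^2$, which inflates the bound by a factor of two and yields, even optimistically, $|\mathcal{Q}|\le 2\sqrt{2(n-1)/n}\,|F|^3$ and hence only the weaker gap threshold $|F|^2<n/(8(n-1))$. The paper's use of the eigenvalue-$1$ relation for the single form $J$ rather than the eigenvalue-$\tfrac12$ relation summed over $k$ is precisely what produces the stated constant $n/(2(n-1))$; your route would, at best, recover a gap theorem with a constant a factor of four smaller.
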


The paper is organized as follows: in the next section we review some background,
with emphasis on homothetically shrinking solitons and
Weinkove's blowup analysis for Type-I singularities of the Yang-Mills flow.
In Section 3, we consider the $\mathcal{F}$-functional and its first variation.
Section 4 is devoted to the calculation of the second variation of the $\mathcal{F}$-functional,
i.e. (\ref{secondvariationformula0}).
In Section 5, we study the F-stability of homothetically shrinking solitons and prove
Theorem \ref{FstabilityandL} and Theorem \ref{gap0}.
In Section 6, we introduce the $\lambda$-entropy and prove Proposition \ref{entropyobtained0}.
In the last section, we prove that entropy-stability implies F-stability, i.e. Theorem \ref{entropyimplyF}.

We would like to point out that although we assume, for simplicity, that the homothetically shrinking solitons
have uniform bounds on $|\nabla^kA|$, our statements except Theorem \ref{entropyimplyF} are still straightforwardly valid if
$|\nabla^kA|$ has polynomial growth. 
{\sl Many results in this paper have also been obtained by Kelleher and Streets \cite{KelleherStreets} independently.}

\section{Preliminaries}

In this section we briefly introduce the Yang-Mills flow and its singularity.
We shall introduce the blowup analysis for Type-I singularities, which was carried out by Weinkove \cite{Weinkove}.
It leads to the main object in this paper, i.e. homothetically shrinking soliton.

Let $(M,g)$ be a closed $n$-dimensional Riemannian manifold.
Let $G$ be a compact Lie group and $P(M,G)$ a principle bundle over $M$ with the structure group $G$.
We fix a $G$-vector bundle $E_M=P(M,G)\times_\rho \mathbb{R}^r$,
associated to $P(M,G)$ via a faithful representation $\rho:G\rightarrow SO(r)$.
Let $\mathfrak{g}$ denote the Lie algebra of $G$.
A connection on $E_M$ is locally a $\mathfrak{g}$-valued $1$-form.
Using Latin letters for the manifold indices,
one may write a connection $A$ in the form of $A=A_idx^i$,
where $A_i\in so(r)$. Using Greek letters for the bundle indices, one may also write
$A=A_{i\beta}^\alpha dx^i.$
The curvature of the connection $A$ is locally a $\mathfrak{g}$-valued $2$-form
$$F=\frac{1}{2}F_{ij}dx^i\wedge dx^j=\frac{1}{2}F_{ij\beta}^\alpha dx^i\wedge dx^j,$$
and
$$F_{ij}=\partial_iA_j-\partial_jA_i+[A_i,A_j].$$

The Yang-Mills functional, defined on the space of connections, is given by
\begin{equation}\label{YMfunctional}
YM(A)=\frac{1}{2}\int_M|F|^2d\mu_g,
\end{equation}
where
$$|F|^2=\frac{1}{2}g^{ik}g^{jl}<F_{ij},F_{kl}>=\frac{1}{2}g^{ik}g^{jl}F_{ij\beta}^\alpha F_{kl\beta}^\alpha,$$
and
$$F_{ij\beta}^\alpha=\partial_iA_{j\beta}^\alpha-\partial_jA_{i\beta}^\alpha
+A_{i\gamma}^\alpha A_{j\beta}^\gamma-A_{j\gamma}^\alpha A_{i\beta}^\gamma.$$
Let $\nabla$ denote the covariant differentiation on $\Gamma(E_M)$ associated to the connection $A$,
and also the covariant differentiation on $\mathfrak{g}$-valued $p$-forms induced by $A$ and the Levi-Civita connection of $(M,g)$.
Curvature $F$ satisfies the Bianchi identity
$d^\nabla F=0$, where $d^\nabla$ denotes the covariant exterior differentiation.
Let $(d^\nabla)^*$ denote the formal adjoint of $d^\nabla$.
A connection $A$ is a critical point of the Yang-Mills functional, called a Yang-Mills connection, if and only if  it is a solution of the
Yang-Mills equation $(d^\nabla)^*F=0$.
The Yang-Mills equation can also be written as
$$\nabla^pF_{pj\beta}^\alpha=0.$$
In normal coordinates of $(M,g)$, we have $\nabla^pF_{pj\beta}^\alpha=\partial_pF_{pj\beta}^\alpha+A_{p\gamma}^\alpha F_{pj\beta}^\gamma-F_{pj\gamma}^\alpha A_{p\beta}^\gamma.$

As the $L^2$-gradient flow of the Yang-Mills functional, the Yang-Mills flow is defined by
\begin{equation}\label{YMflow}
\frac{dA}{dt}=-(d^\nabla)^*F.
\end{equation}
Assume $A(x,t)$ is a smooth solution to the Yang-Mills flow for $0\leq t<T$ and as $t\rightarrow T$ the curvature blows up, i.e.
$\limsup_{t\rightarrow T}\max_{x\in M}|F(x,t)|=\infty$.
If there exists a positive constant $C$ such that
\begin{equation}\label{typeI}
|F(x,t)|\leq \frac{C}{T-t},
\end{equation}
one says that the Yang-Mills flow develops a Type-I singularity, or a rapidly forming singularity.
Otherwise one says that the Yang-Mills flow develops a Type-II singularity.
If (\ref{typeI}) is satisfied and $x_0$ is a point such that $\limsup_{t\rightarrow T}|F(x_0,t)|=\infty$,
we call $(x_0,T)$ a Type-I singularity.

Let $A(x,t)$ be a smooth solution to the Yang-Mills flow and $(x_0,T)$ a Type-I singularity.
We now follow \cite{Weinkove} introducing the blowup procedure around $(x_0,T)$.
Let $B_r(x_0)$ be a small geodesic ball centered at $x_0$ and of radius $r$ over which $E_M$ is trivial.
For simplicity we identify $B_r(x_0)$ with the ball $B_r(0)$ in $\mathbb{R}^n$.
Let $\lambda_i$ be a sequence of positive numbers tending to zero.
For each $i$, one gets a Yang-Mills flow $A^{\lambda_i}(y,s)$ by setting
\begin{equation}\label{rescaling}
A^{\lambda_i}(y,s)=\lambda_iA_p(\lambda_iy,T+\lambda_i^2s)dy^p,
\quad y\in B_{r/\lambda_i}(0), s\in [-\lambda_i^{-2}T,0).
\end{equation}
(An alternative way of obtaining a sequence of blowups of $A(x,t)$ is to rescale the metric around the singular point $x_0$.)
Let $x=\lambda_iy$ and $t=T+\lambda_i^2s$.
By the assumption (\ref{typeI}), the curvature of $A^{\lambda_i}$ satisfies
$$|F^{\lambda_i}(y,s)|=\lambda_i^2|F(x,t)|=|s|^{-1}(T-t)|F(x,t)|\leq C|s|^{-1}.$$

Let $h=h^\alpha_\beta$ be a gauge transformation which acts on connections by
$$h^*\nabla=h^{-1}\circ \nabla\circ h,$$
or equivalently,
$$h^*A=h^{-1}dh+h^{-1}Ah.$$
Note that gauge transformations preserve Yang-Mills flows.
Hence $h^*A^{\lambda_i}(y,s)$ defines a solution to the Yang-Mills flow.
Weinkove \cite{Weinkove} proved the following

\begin{theorem}\label{Weinkove}
Let $(x_0,T)$ be a Type-I singularity of the Yang-Mills flow $A(x,t)$ over $M$.
Then there exists a sequence of blowups $A^{\lambda_i}(y,s)$ defined by (\ref{rescaling}) and a sequence of gauge transformations $h_i$ such that
$h_i^*A^{\lambda_i}(y,s)$ converges smoothly on any compact set to a flow $\widetilde{A}(y,s)$.
Here $\widetilde{A}(y,s)$, defined on a trivial $G$-vector bundle over $\mathbb{R}^n\times (-\infty,0)$, is a solution to the Yang-Mills flow,
which has non-zero curvature and satisfies
\begin{equation}\label{limiting}
\widetilde{\nabla}^p\widetilde{F}_{pj}-\frac{1}{2|s|}y^p\widetilde{F}_{pj}=0.
\end{equation}
\end{theorem}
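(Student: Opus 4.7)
The plan is to carry out the standard parabolic blow-up analysis in the spirit of Hamilton's work on the Ricci flow and then identify the resulting limit as a shrinking soliton via the Yang-Mills monotonicity formula. The Type-I bound \eqref{typeI} scales exactly so that the rescaled flows in \eqref{rescaling} satisfy $|F^{\lambda_i}(y,s)| \le C|s|^{-1}$ uniformly in $i$ on every compact subset of $\mathbb{R}^n \times (-\infty, 0)$. A Bando--Shi-type parabolic bootstrap for the Yang-Mills flow then upgrades this to uniform bounds on all covariant derivatives $|(\nabla^{A^{\lambda_i}})^k F^{\lambda_i}|$ on each compact subset, for every $k \ge 0$.

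To pass from curvature bounds to bounds on the connections themselves, I would apply Uhlenbeck's gauge-fixing theorem on an exhaustion by compact space-time cylinders, producing time-dependent gauge transformations $h_i$ so that $h_i^* A^{\lambda_i}$ sits in a Coulomb or heat gauge with uniform $C^k$ bounds on each cylinder. A diagonal Arzel\`a--Ascoli extraction then yields a subsequence converging smoothly on compact sets to a smooth solution $\widetilde A(y,s)$ of the Yang-Mills flow on $\mathbb{R}^n \times (-\infty, 0)$.

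Non-triviality of the limit follows from the fact that $(x_0, T)$ is a genuine concentration point: by $\varepsilon$-regularity (a consequence of the monotonicity formula), one must have $\Phi_{x_0, T}(A(\cdot, t)) \ge \delta_0 > 0$ for $t$ near $T$, for otherwise the curvature would remain bounded in a neighborhood of $x_0$. Scaling invariance of $\Phi$ transfers this lower bound to $\widetilde A$, so $\widetilde F \not\equiv 0$. To derive the soliton equation \eqref{limiting}, exploit that $\Phi_{x_0, T}(A(\cdot, t))$ is monotone non-increasing in $t$ and hence converges to a finite limit $\Phi_\infty$ as $t \to T$. Rescaling invariance gives $\Phi_{0, 0}(\widetilde A(\cdot, s)) = \lim_i \Phi_{x_0, T}(A(\cdot, T + \lambda_i^2 s)) = \Phi_\infty$ for every $s < 0$, so the monotonicity is saturated on the limit; the equality case of the monotonicity formula is precisely the homothetically shrinking equation $\widetilde{\nabla}^p \widetilde F_{pj} - \frac{1}{2|s|} y^p \widetilde F_{pj} = 0$.

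The main technical hurdle is the gauge-fixing step: Uhlenbeck's theorem requires a smallness hypothesis on the curvature over each local patch, so one must partition each compact cylinder into small-energy pieces, construct local Coulomb (or heat) gauges, and patch them together across both space and time while retaining uniform derivative control on the $h_i$. Everything else is dictated by the scaling of the Type-I bound and by the equality case of the monotonicity formula.
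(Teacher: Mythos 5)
The paper does not prove Theorem~\ref{Weinkove}; it is imported verbatim from Weinkove's paper \cite{Weinkove}, and the only proof-relevant remark in the text is that the $h_i$ are chosen as Coulomb gauges giving $i$-independent $C^k$-bounds. Your outline is, in fact, essentially the strategy of Weinkove's proof: Type-I scaling gives uniform curvature bounds on the rescalings, parabolic bootstrap upgrades these to all derivatives of $F$, Uhlenbeck/Coulomb gauge fixing converts this into uniform $C^k$-bounds on the connections, Arzel\`a--Ascoli extracts the limit flow, $\varepsilon$-regularity via the monotone quantity $\Phi$ gives non-triviality, and the equality case in the monotonicity formula forces the soliton equation. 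You also correctly flag the genuine technical hurdle, namely patching Coulomb gauges across space-time cylinders with uniform derivative control.

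One point you gloss over: the original flow $A(x,t)$ lives on a closed Riemannian manifold $M$, not on $\mathbb{R}^n$, so the monotonicity formula (\ref{monotonicityformula}) as written does not literally apply. One needs the localized version of the Yang--Mills monotonicity formula on $M$ with a cutoff function and a controllable error term (in the spirit of Hamilton's monotonicity formulas on manifolds, cf. \cite{Hamilton,ChenShen}); after rescaling, the base metric converges to the Euclidean one and the error term vanishes in the limit, so both the $\varepsilon$-regularity lower bound and the saturation of monotonicity pass through. Similarly, the identity $\Phi_{0,0}(A^{\lambda_i}(\cdot,s)) = \Phi_{x_0,T}(A(\cdot, T+\lambda_i^2 s))$ that you invoke to transfer $\Phi_\infty$ to the blowup is exact only in the flat model; on $M$ it holds up to the same vanishing error. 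With that caveat, your proposal matches the cited proof.
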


In Theorem \ref{Weinkove}, $h_i$ are chosen as suitable Coulomb gauge transformations so that for any $s<0$ and $k\geq 1$,
$|\nabla^kh_i^*A^{\lambda_i}|$ is uniformly bounded. The bounds do not depend on $i$.
Hence for any $s<0$ and $k\geq 1$, $|\nabla^k\widetilde{A}|$ is uniformly bounded.

A solution $A(y,s)$ to the Yang-Mills flow, defined on a trivial bundle over $\mathbb{R}^n\times (-\infty,0)$, is called a homothetically
shrinking soliton if it satisfies
\begin{equation}\label{homotheticallyshrinker}
A_{i\beta}^\alpha(y,s)=\frac{1}{\sqrt{|s|}} A_{i\beta}^\alpha(\frac{y}{\sqrt{|s|}}, -1)
\end{equation}
for any $y\in \mathbb{R}^n$ and $s<0$, for more details see \cite{Weinkove}.
The limiting Yang-Mills flow $\widetilde{A}(y,s)$ is actually a homothetically shrinking soliton.
In fact via an exponential gauge for $\widetilde{A}(y,s)$, in which $y^p\widetilde{A}_{p\beta}^\alpha=0$,
(\ref{limiting}) and (\ref{homotheticallyshrinker}) are equivalent for the Yang-Mills flow $\widetilde{A}(y,s)$.

One of the main ingredients of Theorem \ref{Weinkove} is the monotonicity formula for the Yang-Mills flow,
see \cite{ChenShen,Hamilton,Naito}.
In the simplest case that $A(x,t)$ is a solution to the Yang-Mills flow over $\mathbb{R}^n$, one can define
\begin{equation}\label{monotonicity}
\Phi_{x_0,t_0}(A(x,t))=(t_0-t)^2\int_{\mathbb{R}^n}|F(x,t)|^2G_{x_0,t_0}(x,t)dx,
\end{equation}
here $t_0>0, t\in [0,\min\{T,t_0\})$, and
$G_{x_0,t_0}(x,t)=[4\pi (t_0-t)]^{-\frac{n}{2}}\exp(-\frac{|x-x_0|^2}{4(t_0-t)})$ is the backward heat kernel.
The monotonicity formula of the Yang-Mills flow reads
\begin{equation}\label{monotonicityformula}
\frac{d}{dt}\Phi_{x_0,t_0}(A(x,t))=-2(t_0-t)^2\int_{\mathbb{R}^n}|\nabla^pF_{pj}-\frac{1}{2(t_0-t)}(x-x_0)^pF_{pj}|^2G_{x_0,t_0}(x,t) d x.
\end{equation}
The monotonicity $\Phi_{x_0,t_0}$ is non-increasing in $t$, and is preserved if and only if
\begin{equation}\label{shrinker1}
\nabla^pF_{pj}-\frac{1}{2(t_0-t)}(x-x_0)^pF_{pj}=0.
\end{equation}

For the limiting Yang-Mills flow $\widetilde{A}(y,s)$ obtained in Theorem \ref{Weinkove}
and any $(x_0,t_0)\in \mathbb{R}^n\times (0,+\infty)$, one can translate it into
\begin{equation}\label{firsttypesoliton}
A(x,t)=A_p(x,t)dx^p=\widetilde{A}_p(x-x_0,t-t_0)dx^p,
\end{equation}
then $A(x,t)$ is a solution to the Yang-Mills flow and (\ref{shrinker1}) is satisfied.
On the other hand if a connection $A(x)$ on a trivial $G$-vector bundle over $\mathbb{R}^n$ satisfying
$$\nabla^pF_{pj}-\frac{1}{2t_0}(x-x_0)^pF_{pj}=0,$$
then, in the exponential gauge for $A(x)$, i.e. a gauge such that $(x-x_0)^pA_p(x)=0$, the flow of connections given by
$$A_p(x,t):=\sqrt{\frac{t_0}{t_0-t}}A_p(x_0+\sqrt{\frac{t_0}{t_0-t}}(x-x_0))$$
is a solution to the Yang-Mills flow which satisfies $(\ref{shrinker1})$.
All these amount to say that limiting flows $\widetilde{A}(y,s)$, homothetically shrinking solitons $A(x)$ and homothetically
shrinking Yang-Mills flows are the same thing.

From now on we assume that $E$ is a trivial $G$-vector bundle over $\mathbb{R}^n$.

\begin{definition}
A connection $A(x)$ on $E$ is called
a homothetically shrinking soliton centered at $(x_0,t_0)$ if it satisfies
\begin{equation}\label{solitondef}
\nabla^pF_{pj}-\frac{1}{2t_0}(x-x_0)^pF_{pj}=0.
\end{equation}
\end{definition}
Let $A(x)$ be a homothetically shrinking soliton centered at $(x_0,t_0)$ and $A(x,t)$ the Yang-Mills flow initiating from $A(x)$.
In an exponential gauge such that $(x-x_0)^pA_p(x,t)=0$, we have for any $\lambda>0$ and any $t<t_0$ that
$A_j(x,t)=\lambda A_j(\lambda(x-x_0)+x_0,\lambda^2(t-t_0)+t_0)$.

\section{$\mathcal{F}$-functional and its first variation}

In this section we define the $\mathcal{F}$-functional of connections on the trivial $G$-vector bundle $E$ over $\mathbb{R}^n$.
Homothetically shrinking solitons are critical points of the $\mathcal{F}$-functional.
We shall prove necessary integral identities for homothetically shrinking solitons.
As a corollary of one of these identities, we give a proof of the fact that
the Yang-Mills flow in dimension four cannot develop a Type-I singularity.

For convenience, we set two $\mathfrak{g}$-valued $1$-forms $J$ and $X$, respectively, by
\begin{equation*}\label{JK}
J:=\nabla^pF_{pj}dx^j, \quad X:=i_{x-x_0}F=(x-x_0)^p F_{pj}dx^j.
\end{equation*}
According to (\ref{solitondef}), $A(x)$ is a homothetically shrinking soliton centered at $(x_0,t_0)$ if and only if
$$J=\frac{1}{2t_0}X.$$
We also set
\begin{equation}\label{setS}
\mathcal{S}_{x_0,t_0}=\{A(x): A \, \text{is a homothetically shrinking soliton with}\, \sup|\nabla^kA|<\infty, \forall k\geq 1\}.
\end{equation}
Note that for any $k\geq 1$, any time-slice $\widetilde{A}(\cdot,s)$ in Theorem \ref{Weinkove} satisfies
$\sup|\nabla^k\widetilde{A}(\cdot,s)|<\infty$.

\begin{definition}
For any $x_0\in \mathbb{R}^n, t_0>0$, the $\mathcal{F}$-functional with respect to $(x_0,t_0)$ is defined by
\begin{equation}\label{Ffunctional}
\mathcal{F}_{x_0,t_0}(A)=t_0^2\int_{\mathbb{R}^n}|F|^2(4\pi t_0)^{-\frac{n}{2}}e^{-\frac{|x-x_0|^2}{4t_0}}dx.
\end{equation}
\end{definition}

We now compute the first variation of the $\mathcal{F}$-functional.
Consider a differentiable $1$-parameter family $(x_s,t_s,A_s)$, where $A_0=A$.
Denote
$$ \dot{t}_s=\frac{d}{ds}t_s, \quad  \dot{x}_s=\frac{d}{ds}x_s,\quad  \theta_s=\frac{d}{ds}A_s,$$
and
$$G_s(x)=(4\pi t_s)^{-\frac{n}{2}}e^{-\frac{|x-x_s|^2}{4t_s}}.$$

\begin{proposition}\label{firstvariationprop}
Assume $|\nabla^kA_s|<\infty$ for any $k\geq 1$ and
$\int_{\mathbb{R}^n}(|\theta_s|^2+|\nabla \theta_s|^2)G_sdx<\infty.$
The first variation of the $\mathcal{F}$-functional is given by
\begin{eqnarray}
\frac{d}{d s}\mathcal{F}_{x_s,t_s}(A_s)&=&
\int_{\mathbb{R}^n}\dot{t}_s(\frac{4-n}{2}t_s+\frac{1}{4}|x-x_s|^2)|F_s|^2G_s(x)dx\nonumber
\\&&+\int_{\mathbb{R}^n}\frac{1}{2}t_s<\dot{x}_s,x-x_s>|F_s|^2G_s(x)dx\nonumber
\\&&-\int_{\mathbb{R}^n}2t_s^2<\theta_s,J_s-\frac{X_s}{2t_s}>G_s(x)dx.\label{firstvariation}
\end{eqnarray}
\end{proposition}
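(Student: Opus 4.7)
The plan is to differentiate $\mathcal{F}_{x_s,t_s}(A_s)$ under the integral sign and split the derivative into three contributions according to where the $s$-dependence sits: in the overall factor $t_s^2$, in the Gaussian weight $G_s(x)$, and in the curvature $F_s$. The hypothesis $|\nabla^k A_s|<\infty$ together with the weighted $L^2$-bounds on $\theta_s$ and $\nabla\theta_s$ will legitimize both the interchange of $\tfrac{d}{ds}$ and the integral and the integration by parts on $\mathbb{R}^n$, since $G_s$ and all its derivatives decay like Gaussians at infinity.

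First I would handle the scalar factors. Direct computation gives
\[
\frac{\partial}{\partial s}\log G_s(x) = -\frac{n}{2t_s}\dot t_s + \frac{|x-x_s|^2}{4 t_s^2}\dot t_s + \frac{\langle x-x_s,\dot x_s\rangle}{2 t_s},
\]
and combining this with $\frac{d}{ds} t_s^2 = 2t_s \dot t_s$ yields
\[
\frac{\partial}{\partial s}\bigl(t_s^2 G_s\bigr) = \Bigl[\tfrac{(4-n)t_s}{2}+\tfrac{|x-x_s|^2}{4}\Bigr]\dot t_s\, G_s + \tfrac{t_s}{2}\langle\dot x_s, x-x_s\rangle\, G_s.
\]
Multiplying by $|F_s|^2$ and integrating reproduces the first two lines of (\ref{firstvariation}).

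Next I would treat the variation of the curvature. Since $\frac{d}{ds}(A_s)_j = (\theta_s)_j$, one has $\frac{d}{ds}(F_s)_{ij} = \nabla_i(\theta_s)_j - \nabla_j(\theta_s)_i$, hence
\[
\frac{d}{ds}|F_s|^2 = 2\langle \nabla^i \theta_s^{\,j}, (F_s)_{ij}\rangle
\]
by the antisymmetry of $F_{ij}$. The key step is then a weighted integration by parts. Using $\nabla^i G_s = -\frac{(x-x_s)^i}{2 t_s}G_s$, we get
\[
\int_{\mathbb{R}^n}\!\!\langle \nabla^i \theta_s^{\,j}, (F_s)_{ij}\rangle G_s\,dx
= -\!\int_{\mathbb{R}^n}\!\!\langle \theta_s^{\,j},\nabla^i (F_s)_{ij}\rangle G_s\,dx + \tfrac{1}{2 t_s}\!\int_{\mathbb{R}^n}\!\!\langle \theta_s^{\,j}, (x-x_s)^i (F_s)_{ij}\rangle G_s\,dx,
\]
where the boundary term at infinity vanishes by the Gaussian decay and the assumed $L^2$-bounds on $\theta_s$. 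Recognizing the resulting integrands as $\langle\theta_s, J_s\rangle$ and $\langle\theta_s, X_s\rangle$ and multiplying by $2 t_s^2$ produces the third line of (\ref{firstvariation}).

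The computation is essentially bookkeeping once the differentiation under the integral and the integration by parts are justified, so there is no deep obstacle; the only point requiring some care is verifying that the boundary contributions and the exchange of limit and integral are legal, which is exactly what the hypotheses $\sup|\nabla^k A_s|<\infty$ and $\int(|\theta_s|^2+|\nabla\theta_s|^2)G_s\,dx<\infty$ are designed to ensure.
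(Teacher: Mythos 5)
Your plan is correct and matches the paper's proof almost line for line: differentiate under the integral, split the derivative among $t_s^2$, $G_s$, and $|F_s|^2$, and integrate by parts against the Gaussian weight to convert $\langle\nabla\theta_s,F_s\rangle$ into $\langle\theta_s, J_s-\frac{X_s}{2t_s}\rangle$. The only difference is that the paper justifies the integration by parts explicitly via a linear cutoff $\eta_l$ and dominated convergence rather than your informal appeal to Gaussian decay, but the substance is the same.
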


\begin{proof}
Note that
$$\frac{\partial}{\partial s}G_s(x)=(-\frac{n}{2}\frac{\dot{t}_s}{t_s}+\frac{\dot{t}_s|x-x_s|^2}{4t_s^2}+\frac{<\dot{x}_s,x-x_s>}{2t_s})G_s(x),$$
and
$$\frac{\partial}{\partial s}|F_s|^2=F_{ij\beta}^\alpha(\nabla_i\theta_{j\beta}^\alpha-\nabla_j\theta_{i\beta}^\alpha),$$
so we have
\begin{eqnarray*}
\frac{d}{d s}\mathcal{F}_{x_s,t_s}(A_s)&=&\int_{\mathbb{R}^n}2t_s\dot{t}_s|F_s|^2G_s(x)dx
+\int_{\mathbb{R}^n}t_s^2F_{ij\beta}^\alpha(\nabla_i\theta_{j\beta}^\alpha-\nabla_j\theta_{i\beta}^\alpha)G_s(x)dx
\\&&+\int_{\mathbb{R}^n}t_s^2|F_s|^2(-\frac{n}{2}\frac{\dot{t}_s}{t_s}+\frac{\dot{t}_s|x-x_s|^2}{4t_s^2}+\frac{<\dot{x}_s,x-x_s>}{2t_s})G_s(x)dx
\\&=&\int_{\mathbb{R}^n}2t_s\dot{t}_s|F_s|^2G_s(x)dx
+\int_{\mathbb{R}^n}2t_s^2F_{ij\beta}^\alpha\nabla_i\theta_{j\beta}^\alpha G_s(x)dx
\\&&+\int_{\mathbb{R}^n}t_s^2|F_s|^2(-\frac{n}{2}\frac{\dot{t}_s}{t_s}+\frac{\dot{t}_s|x-x_s|^2}{4t_s^2}+\frac{<\dot{x}_s,x-x_s>}{2t_s})G_s(x)dx.
\end{eqnarray*}
Let $\eta(x)$ be a cutoff function on $\mathbb{R}^n$.
By integration by parts, we have
\begin{eqnarray}
&&\int_{\mathbb{R}^n}2t_s^2F_{ij\beta}^\alpha\nabla_i\theta_{j\beta}^\alpha G_s(x)\eta(x)dx\nonumber
\\&=&\int_{\mathbb{R}^n}-2t_s^2\theta_{j\beta}^\alpha[\nabla_iF_{ij\beta}^\alpha G_s\eta+F_{ij\beta}^\alpha \partial_i(G_s)\eta
+F_{ij\beta}^\alpha G_s\partial_i\eta]dx\nonumber
\\&=&\int_{\mathbb{R}^n}-2t_s^2\theta_{j\beta}^\alpha[\nabla_iF_{ij\beta}^\alpha \eta-\frac{(x-x_s)^i}{2t_s}F_{ij\beta}^\alpha \eta
+F_{ij\beta}^\alpha \partial_i\eta]G_sdx.\label{cutoff}
\end{eqnarray}
Let $\eta_l (x) = 1$ for $|x|\leq l$, and cut off to zero linearly on $B_{l+1}\setminus B_l$.
Taking $\eta=\eta_l$ in (\ref{cutoff}) and applying the Lebesgue's dominated convergence theorem, we get
\begin{equation}\label{integrationbyparts1}
\int_{\mathbb{R}^n}2t_s^2F_{ij\beta}^\alpha\nabla_i\theta_{j\beta}^\alpha G_s(x)dx
=\int_{\mathbb{R}^n}\theta_{j\beta}^\alpha[-2t_s^2\nabla_iF_{ij\beta}^\alpha +t_s(x-x_s)^iF_{ij\beta}^\alpha ]G_sdx.
\end{equation}
Hence we get
\begin{eqnarray*}
\frac{d}{d s}\mathcal{F}_{x_s,t_s}(A_s)&=&\int_{\mathbb{R}^n}2t_s\dot{t}_s|F_s|^2G_s(x)dx
\\&&+\int_{\mathbb{R}^n}\theta_{j\beta}^\alpha[-2t_s^2\nabla_iF_{ij\beta}^\alpha +t_s(x-x_s)^iF_{ij\beta}^\alpha]G_s(x)dx
\\&&+\int_{\mathbb{R}^n}t_s^2|F_s|^2(-\frac{n}{2}\frac{\dot{t}_s}{t_s}+\frac{\dot{t}_s|x-x_s|^2}{4t_s^2}+\frac{<\dot{x}_s,x-x_s>}{2t_s})G_s(x)dx.
\\&=&\int_{\mathbb{R}^n}\dot{t}_s(\frac{4-n}{2}t_s+\frac{1}{4}|x-x_s|^2)|F_s|^2G_s(x)dx
\\&&+\int_{\mathbb{R}^n}\frac{1}{2}t_s<\dot{x}_s,x-x_s>|F_s|^2G_s(x)dx
\\&&-\int_{\mathbb{R}^n}2t_s^2<\theta_s,J_s-\frac{X_s}{2t_s}>G_s(x)dx.
\end{eqnarray*}

\end{proof}

From Proposition \ref{firstvariationprop}, we have the following

\begin{corollary}
A connection $A(x)$ is a critical point of $\mathcal{F}_{x_0,t_0}$
if and only if $A(x)$ is a homothetically shrinking soliton centered at $(x_0,t_0)$.
\end{corollary}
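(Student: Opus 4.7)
The plan is to reduce the statement to a direct application of Proposition \ref{firstvariationprop} specialized to variations that fix the base point $(x_0, t_0)$. By the definition of critical point of the functional $\mathcal{F}_{x_0, t_0}$, we consider one-parameter families of the form $(x_0, t_0, A_s)$ with $A_0 = A$, so that $\dot{x}_s = 0$ and $\dot{t}_s = 0$. Substituting into the first variation formula, the first two integrals on the right-hand side of (\ref{firstvariation}) vanish, and we are left with
\begin{equation*}
\frac{d}{ds}\bigg|_{s=0}\mathcal{F}_{x_0, t_0}(A_s) = -2 t_0^2 \int_{\mathbb{R}^n} \langle \theta, J - \frac{X}{2 t_0}\rangle G_{x_0, t_0}(x)\, dx,
\end{equation*}
where $\theta = \frac{d}{ds}|_{s=0} A_s$.

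The ``if'' direction is then immediate: if $A$ is a homothetically shrinking soliton centered at $(x_0, t_0)$, then by definition $J = \frac{1}{2t_0} X$ pointwise, so the integrand vanishes and $A$ is critical.

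For the ``only if'' direction, I assume $A$ is a critical point and want to conclude $J - \frac{1}{2t_0} X \equiv 0$. Restricting to compactly supported variations $\theta \in \Omega^1(\mathbb{R}^n, \mathfrak{g})$ (which are admissible because the integrability hypotheses of Proposition \ref{firstvariationprop} are trivially met), criticality forces
\begin{equation*}
\int_{\mathbb{R}^n} \langle \theta, J - \frac{X}{2 t_0}\rangle G_{x_0, t_0}(x)\, dx = 0
\end{equation*}
for every compactly supported $\mathfrak{g}$-valued $1$-form $\theta$. Since $G_{x_0, t_0}(x) > 0$ everywhere, the fundamental lemma of the calculus of variations yields $J - \frac{1}{2t_0} X = 0$ pointwise on $\mathbb{R}^n$, which is precisely the soliton equation (\ref{solitondef}).

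There is no real obstacle here: the content is already packaged in Proposition \ref{firstvariationprop}, and the argument is just a standard weak-to-strong conversion using compactly supported test forms together with the strict positivity of the Gaussian weight. The only thing to verify is that compactly supported $\theta$'s indeed satisfy the integrability assumption $\int (|\theta|^2 + |\nabla \theta|^2) G\, dx < \infty$ used in the first variation, which is clear.
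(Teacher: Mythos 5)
Your proof is correct and follows the same route the paper intends: the paper states the corollary as an immediate consequence of Proposition \ref{firstvariationprop} without spelling out the details, and your argument (specializing the first variation to fixed $(x_0,t_0)$ and then applying the fundamental lemma of the calculus of variations with the strict positivity of the Gaussian weight) is precisely what is implied.
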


We shall check that $(A(x),x_0,t_0)$ is a critical point of the $\mathcal{F}$-functional $(\widetilde{A},x,t)\mapsto F_{x,t}(\widetilde{A})$
if and only if $A(x)$ is a homothetically shrinking soliton centered at $(x_0,t_0)$.
To check this we need some identities for homothetically shrinking solitons.
We also need such identities in the calculation of the second variation of the $\mathcal{F}$-functional in the next section.
Denote
$$G(x)=(4\pi t_0)^{-\frac{n}{2}}e^{-\frac{|x-x_0|^2}{4t_0}}.$$

\begin{lemma}\label{identities}
Let $A(x)$ be a homothetically shrinking soliton centered at $(x_0,t_0)$ and $\sup|F(x)|<\infty$.
Let $\varphi=\varphi^p\partial_p$ be a vector field on $\mathbb{R}^n$ such that $|\varphi|$ is a polynomial in $|x-x_0|$, and
$V$ a vector in $\mathbb{R}^n$. Then we have
\begin{equation*}
\int_{\mathbb{R}^n}\varphi^p(x-x_0)^p|F|^2G(x)dx
=\int_{\mathbb{R}^n}[2t_0\partial_p(\varphi^p)|F|^2-4t_0\partial_i\varphi^pF_{pj\beta}^\alpha F_{ij\beta}^\alpha]G(x) dx.
\end{equation*}
In particular,
\begin{itemize}
\item [(a)]
$\int_{\mathbb{R}^n}|x-x_0|^2|F|^2G(x)dx=\int_{\mathbb{R}^n}2(n-4)t_0|F|^2G(x) dx;$
\item [(b)]
$\int_{\mathbb{R}^n}(x-x_0)^k|F|^2G(x)dx=0;$
\item [(c)]
$\int_{\mathbb{R}^n}|x-x_0|^4|F|^2G(x)dx=\int_{\mathbb{R}^n}[4(n-2)(n-4)t_0^2|F|^2-32t_0^3|J|^2]G dx;$
\item [(d)]
$\int_{\mathbb{R}^n}|x-x_0|^2<V,x-x_0>|F|^2G(x)dx=0;$
\item [(e)]
$\int_{\mathbb{R}^n}<x-x_0,V>^2|F|^2Gdx=\int_{\mathbb{R}^n}(2t_0|V|^2|F|^2-4t_0<V^iF_{ij},V^pF_{pj}>)G dx$.
\end{itemize}
\end{lemma}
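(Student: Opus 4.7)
The general identity comes from integrating by parts twice against the Gaussian weight $G$, using the Bianchi identity and the soliton equation for a key cancellation along the way. I would begin from the Gaussian identity $(x-x_0)^p G = -2t_0\,\partial_p G$ to rewrite the left-hand side as $-2t_0\int \varphi^p |F|^2 \partial_p G\,dx$. One integration by parts (justified by the Gaussian decay of $G$, $\sup|F|<\infty$, and polynomial growth of $\varphi$, via a cutoff exactly as in the proof of Proposition \ref{firstvariationprop}) yields $2t_0\int (\partial_p\varphi^p)|F|^2 G\,dx + 2t_0\int \varphi^p \partial_p|F|^2\,G\,dx$. I then expand $\partial_p|F|^2 = \langle\nabla_p F_{ij}, F_{ij}\rangle$, apply the Bianchi identity $\nabla_p F_{ij} = \nabla_i F_{pj} - \nabla_j F_{pi}$, swap $i\leftrightarrow j$ in one term and exploit the antisymmetry of $F$ to obtain $\partial_p|F|^2 = 2\langle\nabla_i F_{pj}, F_{ij}\rangle$. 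The remaining piece $4t_0\int\varphi^p\langle\nabla_i F_{pj}, F_{ij}\rangle G\,dx$ I treat by integrating by parts in $\partial_i$: from $0=\int\partial_i[\varphi^p\langle F_{pj},F_{ij}\rangle G]\,dx$ four terms appear, and the term containing $\nabla_i F_{ij}$, replaced via the soliton equation by $\tfrac{1}{2t_0}(x-x_0)^i F_{ij}$, cancels exactly against the contribution of $\partial_i G = -\tfrac{(x-x_0)^i}{2t_0}G$. What survives is $\int\varphi^p\langle\nabla_i F_{pj}, F_{ij}\rangle G\,dx = -\int\partial_i\varphi^p\langle F_{pj}, F_{ij}\rangle G\,dx$, and collecting everything proves the general identity.

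For the corollaries, (a) is $\varphi=x-x_0$, (b) is a constant vector $\varphi=e_k$, and (e) is $\varphi^p=\langle V,x-x_0\rangle V^p$, in each case the derivatives of $\varphi$ substitute in directly. For (c) I pick $\varphi^p = |x-x_0|^2 (x-x_0)^p$, so $\partial_p\varphi^p = (n+2)|x-x_0|^2$ and $\partial_i\varphi^p = 2(x-x_0)^i(x-x_0)^p + |x-x_0|^2\delta_{ip}$; (a) handles the first type of term, and the soliton equation $(x-x_0)^p F_{pj}=2t_0 J_j$ rewrites the cross term as $4t_0^2\int |J|^2 G\,dx$, producing the advertised constants $4(n-2)(n-4)t_0^2$ and $-32t_0^3$.

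The subtle case is (d). I apply the general identity with two choices of $\varphi$ whose contractions $\varphi^p(x-x_0)^p$ coincide: $\varphi^p=|x-x_0|^2 V^p$ and $\varphi^p=\langle V,x-x_0\rangle(x-x_0)^p$ both produce $|x-x_0|^2\langle V,x-x_0\rangle$ on the left. After using (b) to kill the $\int\langle V,x-x_0\rangle|F|^2 G\,dx$ contributions and simplifying with $X=2t_0 J$, the two right-hand sides evaluate respectively to $-16t_0^2 V^p\int\langle J_j,F_{pj}\rangle G\,dx$ and $-8t_0^2 V^p\int\langle J_j,F_{pj}\rangle G\,dx$. Since these must agree, I conclude $V^p\int\langle J_j,F_{pj}\rangle G\,dx=0$ for every $V$, which forces the common value to be zero, establishing (d). The main obstacle is spotting that the soliton equation produces precisely the cancellation against $\partial_i G$ in the second integration by parts; once that is in hand, the rest is essentially bookkeeping.
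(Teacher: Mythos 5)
Your proof is correct and follows essentially the same route as the paper's: rewrite $(x-x_0)^p G = -2t_0\partial_p G$, integrate by parts, use the Bianchi identity to expand $\partial_p|F|^2$, integrate by parts again in $\partial_i$, and invoke the soliton equation to produce the crucial cancellation against $\partial_i G$, with the cutoff argument justifying each step; the derivations of (a)--(e), including the two-choices-of-$\varphi$ trick for (d), match the paper exactly. The only organizational difference is that the paper works backwards from the auxiliary quantity $\int 4t_0\varphi^p\langle F_{pj},J_j\rangle G\eta\,dx$ and keeps the combination $J-\frac{1}{2t_0}X$ visible until the end, whereas you apply the soliton equation immediately inside the second integration by parts --- a mildly cleaner bookkeeping of the same computation.
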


\begin{proof}
Let $\eta(x)$ be a cutoff function on $\mathbb{R}^n$. By integration by parts, we get
\begin{eqnarray*}
&&\int_{\mathbb{R}^n}\varphi^p(x-x_0)^p|F|^2G(x)\eta(x)dx
\\&=&\int_{\mathbb{R}^n}-2t_0\varphi^p|F|^2\partial_p G(x)\eta(x)dx
\\&=&\int_{\mathbb{R}^n}2t_0[\partial_p(\varphi^p)|F|^2\eta+\varphi^p\partial_p(|F|^2)\eta+\varphi^p|F|^2\partial_p\eta]G(x)dx.
\end{eqnarray*}
By integration by parts we have
\begin{eqnarray*}
\int_{\mathbb{R}^n}4t_0\varphi^pF_{pj\beta}^\alpha J_{j\beta}^\alpha G\eta dx
&=&\int_{\mathbb{R}^n}4t_0\varphi^p[\nabla_i(F_{pj\beta}^\alpha F_{ij\beta}^\alpha)-\nabla_iF_{pj\beta}^\alpha F_{ij\beta}^\alpha] G\eta dx
\\&=&\int_{\mathbb{R}^n}-4t_0F_{pj\beta}^\alpha F_{ij\beta}^\alpha[\partial_i\varphi^p-\frac{(x-x_0)^i}{2t_0}\varphi^p]G\eta dx
\\&&-\int_{\mathbb{R}^n}2t_0\varphi^p(\nabla_iF_{pj\beta}^\alpha F_{ij\beta}^\alpha+\nabla_jF_{ip\beta}^\alpha F_{ij\beta}^\alpha) G\eta dx
\\&&-\int_{\mathbb{R}^n}4t_0\varphi^pF_{pj\beta}^\alpha F_{ij\beta}^\alpha G\partial_i\eta dx.
\end{eqnarray*}
It then follows from the Bianchi identity that
\begin{eqnarray*}
\int_{\mathbb{R}^n}4t_0\varphi^pF_{pj\beta}^\alpha J_{j\beta}^\alpha G\eta dx
&=&\int_{\mathbb{R}^n}-4t_0F_{pj\beta}^\alpha F_{ij\beta}^\alpha[\partial_i\varphi^p-\frac{(x-x_0)^i}{2t_0}\varphi^p]G\eta dx
\\&&-\int_{\mathbb{R}^n}2t_0\varphi^p\nabla_pF_{ij\beta}^\alpha F_{ij\beta}^\alpha G\eta dx
-\int_{\mathbb{R}^n}4t_0\varphi^pF_{pj\beta}^\alpha F_{ij\beta}^\alpha G\partial_i\eta dx
\\&=&\int_{\mathbb{R}^n}-4t_0F_{pj\beta}^\alpha F_{ij\beta}^\alpha[\partial_i\varphi^p-\frac{(x-x_0)^i}{2t_0}\varphi^p]G\eta dx
\\&&-\int_{\mathbb{R}^n}2t_0\varphi^p\partial_p(|F|^2) G\eta dx
-\int_{\mathbb{R}^n}4t_0\varphi^pF_{pj\beta}^\alpha F_{ij\beta}^\alpha G\partial_i\eta dx,
\end{eqnarray*}
i.e.
\begin{eqnarray*}
\int_{\mathbb{R}^n}2t_0\varphi^p\partial_p(|F|^2) G\eta dx
&=&-\int_{\mathbb{R}^n}4t_0\varphi^pF_{pj\beta}^\alpha J_{j\beta}^\alpha G\eta dx
\\&&-\int_{\mathbb{R}^n}4t_0F_{pj\beta}^\alpha F_{ij\beta}^\alpha[\partial_i\varphi^p-\frac{(x-x_0)^i}{2t_0}\varphi^p]G\eta dx
\\&&-\int_{\mathbb{R}^n}4t_0\varphi^pF_{pj\beta}^\alpha F_{ij\beta}^\alpha G\partial_i\eta dx.
\end{eqnarray*}
Thus we have
\begin{eqnarray*}
&&\int_{\mathbb{R}^n}\varphi^p(x-x_0)^p|F|^2G(x)\eta(x)dx
\\&=&\int_{\mathbb{R}^n}2t_0[\partial_p(\varphi^p)|F|^2\eta+\varphi^p\partial_p(|F|^2)\eta+\varphi^p|F|^2\partial_p\eta]G(x)dx
\\&=&\int_{\mathbb{R}^n}2t_0\partial_p(\varphi^p)|F|^2G\eta dx
-\int_{\mathbb{R}^n}4t_0\varphi^pF_{pj\beta}^\alpha J_{j\beta}^\alpha G\eta dx
\\&&-\int_{\mathbb{R}^n}4t_0F_{pj\beta}^\alpha F_{ij\beta}^\alpha[\partial_i\varphi^p-\frac{(x-x_0)^i}{2t_0}\varphi^p]G\eta dx
\\&&-\int_{\mathbb{R}^n}4t_0\varphi^pF_{pj\beta}^\alpha F_{ij\beta}^\alpha G\partial_i\eta dx+\int_{\mathbb{R}^n}2t_0\varphi^p|F|^2G\partial_p\eta dx
\\&=&\int_{\mathbb{R}^n}[2t_0\partial_p(\varphi^p)|F|^2-4t_0\partial_i\varphi^pF_{pj\beta}^\alpha F_{ij\beta}^\alpha]G\eta dx
\\&&-\int_{\mathbb{R}^n}4t_0\varphi^pF_{pj\beta}^\alpha (J_{j\beta}^\alpha -\frac{1}{2t_0}X_{j\beta}^\alpha)G\eta dx
\\&&-\int_{\mathbb{R}^n}4t_0\varphi^pF_{pj\beta}^\alpha F_{ij\beta}^\alpha G\partial_i\eta dx+\int_{\mathbb{R}^n}2t_0\varphi^p|F|^2G\partial_p\eta dx.
\end{eqnarray*}
Therefore for a homothetically shrinking soliton centered at $(x_0,t_0)$,
\begin{eqnarray}\label{identityformula0}
\int_{\mathbb{R}^n}\varphi^p(x-x_0)^p|F|^2G\eta dx
&=&\int_{\mathbb{R}^n}[2t_0\partial_p(\varphi^p)|F|^2-4t_0\partial_i\varphi^pF_{pj\beta}^\alpha F_{ij\beta}^\alpha]G\eta dx\nonumber
\\&&-\int_{\mathbb{R}^n}4t_0\varphi^pF_{pj\beta}^\alpha F_{ij\beta}^\alpha G\partial_i\eta dx+\int_{\mathbb{R}^n}2t_0\varphi^p|F|^2G\partial_p\eta dx.
\end{eqnarray}
Applying to (\ref{identityformula0}) with $\eta(x)=\eta_l (x)$,
where $\eta_l (x)=1$ for $|x|\leq l$ and is cut off to zero linearly on $B_{l+1}\setminus B_l$, we get
\begin{equation}\label{identityformula}
\int_{\mathbb{R}^n}\varphi^p(x-x_0)^p|F|^2G dx
=\int_{\mathbb{R}^n}[2t_0\partial_p(\varphi^p)|F|^2-4t_0\partial_i\varphi^pF_{pj\beta}^\alpha F_{ij\beta}^\alpha]G dx.
\end{equation}

Taking $\varphi^p=(x-x_0)^p$, by (\ref{identityformula}) we get
\begin{eqnarray*}
\int_{\mathbb{R}^n}|x-x_0|^2|F|^2G(x)dx=\int_{\mathbb{R}^n}2(n-4)t_0|F|^2G(x) dx.
\end{eqnarray*}

Taking $\varphi^p=\delta_k^p$, by (\ref{identityformula}) we get for any $k=1,\cdots,n$,
$$\int_{\mathbb{R}^n}(x-x_0)^k|F|^2G(x)dx=0.$$

Taking $\varphi^p=|x-x_0|^2(x-x_0)^p$, by (\ref{identityformula}) and (a) we get
\begin{eqnarray*}
&&\int_{\mathbb{R}^n}|x-x_0|^4|F|^2G(x)dx
\\&=&\int_{\mathbb{R}^n}[2t_0(n+2)|x-x_0|^2|F|^2-8t_0|x-x_0|^2|F|^2-8t_0|X|^2]G dx
\\&=&\int_{\mathbb{R}^n}[4(n-2)(n-4)t_0^2|F|^2-32t_0^3|J|^2]G dx.
\end{eqnarray*}

Taking $\varphi^p=|x-x_0|^2V^p$, by (\ref{identityformula}) and (b) we get
\begin{eqnarray*}
\int_{\mathbb{R}^n}|x-x_0|^2<V,x-x_0>|F|^2G(x)dx
&=&\int_{\mathbb{R}^n}-16t_0^2<J_j,V^pF_{pj}>G dx.
\end{eqnarray*}
On the other hand taking $\varphi^p=<V,x-x_0>(x-x_0)^p$, by (\ref{identityformula}) and (b) we get
\begin{eqnarray*}
\int_{\mathbb{R}^n}|x-x_0|^2<V,x-x_0>|F|^2G(x)dx
&=&\int_{\mathbb{R}^n}-8t_0^2<J_j,V^iF_{ij}>G dx.
\end{eqnarray*}
Thus we have
$$\int_{\mathbb{R}^n}|x-x_0|^2<V,x-x_0>|F|^2G(x)dx=\int_{\mathbb{R}^n}<J_j,V^pF_{pj}>G dx=0.$$

Taking $\varphi^p=<V,x-x_0>V^p$, by (\ref{identityformula}) we get
\begin{eqnarray*}
\int_{\mathbb{R}^n}<x-x_0,V>^2|F|^2Gdx=\int_{\mathbb{R}^n}(2t_0|V|^2|F|^2-4t_0<V^iF_{ij},V^pF_{pj}>)G dx.
\end{eqnarray*}
\end{proof}

By the first variation formula (\ref{firstvariation}), (a) and (b) of Lemma \ref{identities} we get the following

\begin{corollary}\label{criticalofentropy}
$(A(x),x_0,t_0)$ is a critical point of the $\mathcal{F}$-functional if and only if
$A(x)$ is a homothetically shrinking soliton centered at $(x_0,t_0)$.
\end{corollary}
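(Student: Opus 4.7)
The plan is to apply the first variation formula (\ref{firstvariation}) directly at $s=0$, evaluated at $(x_0,t_0,A)$, and check both directions of the equivalence separately. Throughout, I will only use the three terms appearing on the right side of (\ref{firstvariation}) together with the integral identities (a) and (b) of Lemma \ref{identities}.

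For the ``if'' direction, I will assume that $A(x)$ is a homothetically shrinking soliton centered at $(x_0,t_0)$ and show that every first variation vanishes, regardless of the choice of $\dot t_0,\dot x_0,\theta_0$. The third integrand in (\ref{firstvariation}) contains the factor $J-\tfrac{X}{2t_0}$, which vanishes identically by (\ref{solitondef}). For the first integrand, the coefficient of $\dot t_0$ is
\[
\int_{\mathbb{R}^n}\Bigl(\tfrac{4-n}{2}t_0+\tfrac{1}{4}|x-x_0|^2\Bigr)|F|^2 G\,dx,
\]
and identity (a) of Lemma \ref{identities} gives $\int |x-x_0|^2|F|^2 G\,dx = 2(n-4)t_0\int |F|^2 G\,dx$, so the two pieces exactly cancel. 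For the second integrand, the coefficient of each component $\dot x_0^k$ is a constant multiple of $\int (x-x_0)^k|F|^2 G\,dx$, which vanishes by identity (b). Hence $\frac{d}{ds}\big|_{s=0}\mathcal{F}_{x_s,t_s}(A_s)=0$ for all admissible deformations.

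For the ``only if'' direction, I will assume $(A,x_0,t_0)$ is a critical point and recover the soliton equation (\ref{solitondef}) by a standard choice of test variation. Taking $\dot t_0=0$, $\dot x_0=0$, and varying $A_s$ with $\theta_0=\theta$ an arbitrary compactly supported $\mathfrak g$-valued $1$-form kills the first two terms in (\ref{firstvariation}) and leaves
\[
0=-\int_{\mathbb{R}^n}2t_0^{\,2}\bigl\langle\theta,\,J-\tfrac{X}{2t_0}\bigr\rangle G(x)\,dx.
\]
Since $G>0$ and $\theta$ is arbitrary, the fundamental lemma of the calculus of variations forces $J=\tfrac{X}{2t_0}$ pointwise, which is exactly the soliton equation at the center $(x_0,t_0)$.

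No step is genuinely an obstacle: the only content is recognizing that identities (a) and (b) of Lemma \ref{identities} are precisely the statements needed to kill the parameter-variation terms when $A$ is a soliton, which reflects the translation and rescaling invariance of $\mathcal{F}$ already observed in the introduction. If there is a subtlety worth flagging, it is the justification for varying $\theta$ in an arbitrary compactly supported manner; this is automatic because the hypothesis $\sup|\nabla^k A|<\infty$ in the definition (\ref{setS}) of $\mathcal{S}_{x_0,t_0}$, combined with compact support of $\theta$, ensures that the integrability hypothesis of Proposition \ref{firstvariationprop} is satisfied along such a deformation.
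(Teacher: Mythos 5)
Your proof is correct and takes essentially the same route as the paper: the paper also deduces the corollary directly from the first variation formula (\ref{firstvariation}) together with identities (a) and (b) of Lemma \ref{identities}. You simply spell out what the paper leaves implicit, including the clean observation that the ``only if'' direction needs no identities at all, only the fundamental lemma applied to the $\theta$-term.
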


\begin{corollary}\label{notypeI}
When $n=2, 3$, or $4$, there exists no homothetically shrinking soliton
such that $|F|$ is uniformly bounded and not identically zero.
In particular in dimension four, the Yang-Mills flow on $E_M$ cannot develop a singularity of Type-I.
\end{corollary}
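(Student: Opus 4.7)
\textbf{Proof plan for Corollary \ref{notypeI}.} The plan is to extract both statements directly from identity (a) of Lemma \ref{identities}, which asserts
$$\int_{\mathbb{R}^n}|x-x_0|^2|F|^2G(x)\,dx = 2(n-4)t_0\int_{\mathbb{R}^n}|F|^2G(x)\,dx.$$
Under the hypothesis that $A(x)\in \mathcal{S}_{x_0,t_0}$ (so in particular $|F|$ is uniformly bounded), both integrals are absolutely convergent thanks to the Gaussian weight $G$, so the identity can be applied without further justification.

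First I would treat the cases $n=2$ and $n=3$. Here the left-hand side is non-negative while the coefficient $2(n-4)t_0$ on the right is strictly negative; thus both sides must vanish. The vanishing of the right-hand side forces $\int_{\mathbb{R}^n}|F|^2G\,dx=0$, and since $G>0$ everywhere and $|F|$ is continuous, this yields $F\equiv 0$. For $n=4$ the right-hand side is identically zero, so the left-hand side vanishes as well; since the integrand $|x-x_0|^2|F|^2G$ is non-negative and continuous, it must vanish pointwise, giving $F(x)=0$ for every $x\ne x_0$, and then $F\equiv 0$ by continuity. This establishes the non-existence statement in the three dimensions $n\in\{2,3,4\}$.

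For the second assertion, I would combine this non-existence result with Weinkove's blowup analysis (Theorem \ref{Weinkove}). Suppose, for contradiction, that the Yang-Mills flow on $E_M$ over a four-dimensional base develops a Type-I singularity at $(x_0,T)$. Then Theorem \ref{Weinkove} produces, after rescaling and suitable Coulomb gauge transformations, a smooth limiting flow $\widetilde{A}(y,s)$ on a trivial $G$-bundle over $\mathbb{R}^4\times(-\infty,0)$ which is a homothetically shrinking soliton with non-zero curvature, and for which $|\nabla^k\widetilde A|$ is uniformly bounded for every $k\ge 1$ and every fixed time-slice. In the exponential gauge, the time-slice $\widetilde{A}(\cdot,-1)$ thus belongs to $\mathcal{S}_{0,1}$ and has non-vanishing curvature, directly contradicting the non-existence statement already proved for $n=4$.

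There is no serious obstacle here; the only thing to be careful about is ensuring that the boundary terms arising from the cutoff $\eta_l$ in Lemma \ref{identities} really do vanish in our setting, but this is automatic because $|F|$ (and hence the whole integrand outside the Gaussian) grows at most polynomially while $G$ decays like $e^{-|x|^2/(4t_0)}$. Once identity (a) is applied, the conclusion follows by a purely algebraic sign comparison, and the Type-I consequence in dimension four is then an immediate corollary of Weinkove's theorem.
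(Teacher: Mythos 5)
Your proposal is correct and follows essentially the same route as the paper: deduce the first part from Lemma \ref{identities}(a) by a sign comparison, then derive the second part from Weinkove's Theorem \ref{Weinkove}. The paper's proof is terser (it simply says the first part "follows from" identity (a)), but the reasoning you spell out — nonnegative left side versus nonpositive right side for $n\le 4$, then applying continuity and positivity of $G$ — is exactly what is implicit there.
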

\begin{proof}
The first part follows from Lemma \ref{identities} (a).
By Weinkove's result \cite{Weinkove}, see also Section 2, at a Type-I singularity of a Yang-Mills flow
one can obtain a homothetically shrinking soliton on a trivial $G$-vector bundle over $\mathbb{R}^n$
whose curvature is uniformly bounded and non-zero. Therefore in dimension four if a Type-I singularity occurs,
it would contradict with the non-existence of such a homothetically shrinking soliton.
\end{proof}

\section{Second variation of $\mathcal{F}$-functional}

We now compute the second variation of the $\mathcal{F}$-functional at a homothetically shrinking soliton $A(x)$.
Let $d^\nabla$ denote the covariant exterior differentiation on $\mathfrak{g}$-valued forms
and $(d^\nabla)^*$ denote the formal adjoint of $d^\nabla$. For a $\mathfrak{g}$-valued $1$-form $\theta$, let
\begin{equation}\label{R}
\mathcal{R}(\theta_j)=\mathcal{R}(\theta)(\partial_j):=[F_{ij},\theta_i],
\end{equation}
and
\begin{equation}\label{Jacobioperator}
L\theta:=-t_0[(d^\nabla)^*d^\nabla\theta+\mathcal{R}(\theta)+i_{\frac{1}{2t_0}(x-x_0)}d^\nabla\theta].
\end{equation}
We also introduce the space
\begin{equation}
W_G^{2,2}:=\{\theta: \int_{\mathbb{R}^n}(|\theta|^2+|\nabla \theta|^2+|L\theta|^2)G(x)dx<\infty\}.
\end{equation}
Denote
$$\dot{t}_s|_{s=0}=q,\quad  \dot{x}_s|_{s=0}=V, \quad  \theta=\frac{d}{ds}|_{s=0}A_s,$$
$$\mathcal{F}_{x_0,t_0}''(q,V,\theta)=\frac{d^2}{d s^2}|_{s=0}\mathcal{F}_{x_s,t_s}(A_s).$$

\begin{proposition}\label{secondvariation0}
Let $A(x)$ be a homothetically shrinking soliton in $\mathcal{S}_{x_0,t_0}$, see (\ref{setS}).
Then for any $\theta\in W_G^{2,2}$, we have
\begin{equation}\label{secondvariation}
\frac{1}{2t_0}\mathcal{F}_{x_0,t_0}''(q,V,\theta)=\int_{\mathbb{R}^n}<-L\theta-2qJ-i_VF, \theta>G dx
-\int_{\mathbb{R}^n}(q^2|J|^2+\frac{1}{2}|i_V F|^2)G dx.
\end{equation}
\end{proposition}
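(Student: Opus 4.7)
Differentiate the first variation formula (\ref{firstvariation}) once more in $s$ along the linear family $t_s=t_0+sq$, $x_s=x_0+sV$, $A_s=A+s\theta$, and evaluate at $s=0$. Since $(A,x_0,t_0)$ is a critical point of $(A,x,t)\mapsto \mathcal{F}_{x,t}(A)$ by Corollary \ref{criticalofentropy}, the second variation depends only on $(q,V,\theta)$, so this linear parametrization is admissible. Write the three lines of (\ref{firstvariation}) as $I(s)+II(s)+III(s)$.

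First, because $A$ satisfies $J=X/(2t_0)$, the integrand of $III(s)$ vanishes at $s=0$, so only the $s$-derivative of the factor $J_s-X_s/(2t_s)$ contributes:
$$III'(0)=-2t_0^2\int_{\mathbb{R}^n}\langle\theta,\frac{d}{ds}\Big|_{s=0}(J_s-\frac{X_s}{2t_s})\rangle G\,dx.$$
Using the standard linearization $\dot F=d^\nabla\theta$ one obtains $\dot J=-(d^\nabla)^*d^\nabla\theta-\mathcal{R}(\theta)$ and $\dot X=-i_V F+i_{x-x_0}d^\nabla\theta$; combined with $\frac{d}{ds}|_{s=0}(2t_s)^{-1}=-q/(2t_0^2)$ and the soliton identity $X=2t_0J$, this gives $\frac{d}{ds}|_{s=0}(J_s-X_s/(2t_s))=L\theta/t_0+i_V F/(2t_0)+qJ/t_0$, whence $III'(0)=-2t_0\int\langle L\theta+qJ+\frac{1}{2}i_V F,\theta\rangle G\,dx$.

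Next, $I(0)=II(0)=0$ by Lemma \ref{identities}(a),(b). Differentiating $I(s)$ and $II(s)$ produces three kinds of contributions: (i) from the explicit $s$-dependence of the polynomial weight; (ii) from $\frac{d}{ds}|F_s|^2|_{s=0}=2\langle F,d^\nabla\theta\rangle$; (iii) from $\frac{d}{ds}G_s|_{s=0}=(-\frac{nq}{2t_0}+\frac{q|x-x_0|^2}{4t_0^2}+\frac{\langle V,x-x_0\rangle}{2t_0})G$. The key tool for the type-(ii) terms is the identity
$$\int h\langle F,d^\nabla\theta\rangle G\,dx=-\int\langle i_{\nabla h}F,\theta\rangle G\,dx,$$
valid for any smooth scalar $h$ of moderate growth. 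It follows from the formal adjoint computation $(d^\nabla)^*(hGF)_j=-G(\partial_ih)F_{ij}+hG(X_j/(2t_0)-J_j)$, whose second summand vanishes on the soliton. Applied with $h=(4-n)t_0/2+|x-x_0|^2/4$ for $I$ and $h=t_0\langle V,x-x_0\rangle/2$ for $II$, and noting $i_{(x-x_0)/2}F=X/2=t_0J$, the type-(ii) parts collapse to $-2qt_0\int\langle J,\theta\rangle G\,dx$ and $-t_0\int\langle i_V F,\theta\rangle G\,dx$ respectively.

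The type-(i) and type-(iii) pieces combine into purely scalar integrals against $|F|^2$ of polynomials in $x-x_0$ and $V$. All parts of Lemma \ref{identities} are needed here: (a) makes the pure $|F|^2$ pieces cancel between type-(i) and type-(iii); (b) and (d) kill the pieces linear in $V$; (c) converts the $|x-x_0|^4|F|^2$ term into $-2q^2t_0\int|J|^2 G\,dx$; (e) converts the $\langle V,x-x_0\rangle^2|F|^2$ term into $-t_0\int|i_V F|^2 G\,dx$. Summing $I'(0)+II'(0)+III'(0)$ and dividing by $2t_0$ yields (\ref{secondvariation}). The main obstacle is the careful bookkeeping in the type-(i) and type-(iii) scalar terms, where the many coefficients must combine precisely (via Lemma \ref{identities}) for the $|F|^2$ integrals to cancel and the $|J|^2, |i_V F|^2$ integrals to emerge with the correct constants; differentiation under the integral is justified by the hypothesis $\sup|\nabla^kA|<\infty$ together with $\theta\in W_G^{2,2}$, which bounds the relevant integrands uniformly in $s$ by $L^1(G\,dx)$ functions.
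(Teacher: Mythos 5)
Your proposal is correct and follows essentially the same approach as the paper: differentiate (\ref{firstvariation}) once more along a linear path, drop the terms that vanish by the soliton identity $J=X/(2t_0)$, integrate the $\langle F, d^\nabla\theta\rangle$ contributions by parts against $G$ (the paper carries out this integration by parts inline rather than packaging it as your reusable identity $\int h\langle F, d^\nabla\theta\rangle G\,dx = -\int\langle i_{\nabla h}F,\theta\rangle G\,dx$, which is valid on the soliton for exactly the reason you state), and apply all five parts of Lemma \ref{identities} to reduce the remaining scalar integrals. Your three-type bookkeeping is a clean organizational device, but the underlying calculation coincides with the paper's, including the linearizations $\dot J = -(d^\nabla)^*d^\nabla\theta-\mathcal{R}(\theta)$, $\dot X=-i_VF+i_{x-x_0}d^\nabla\theta$, and the cancellation of the pure $|F|^2$ integrals.
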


\begin{proof}
Recall that
\begin{eqnarray*}
\frac{d}{d s}\mathcal{F}_{x_s,t_s}(A_s)&=&
\int_{\mathbb{R}^n}\dot{t}_s(\frac{4-n}{2}t_s+\frac{1}{4}|x-x_s|^2)|F_s|^2G_s(x)dx
\\&&+\int_{\mathbb{R}^n}\frac{1}{2}t_s<\dot{x}_s,x-x_s>|F_s|^2G_s(x)dx
\\&&-\int_{\mathbb{R}^n}2t_s^2<J_s-\frac{X_s}{2t_s},\theta_s>G_s(x)dx.
\end{eqnarray*}
By the assumption that $A(x)\in \mathcal{S}_{x_0,t_0}$ and Lemma \ref{identities} (a, b), we have
\begin{eqnarray*}
\mathcal{F}_{x_0,t_0}''(q,V,\theta)&=&
\int_{\mathbb{R}^n}[q(\frac{4-n}{2}q-\frac{1}{2}<x-x_0,V>)+\frac{1}{2}t_0<V,-V>]|F|^2G dx
\\&&+\int_{\mathbb{R}^n}[q(\frac{4-n}{2}t_0+\frac{1}{4}|x-x_0|^2)+\frac{1}{2}t_0<V,x-x_0>]\frac{\partial |F_s|^2}{\partial s}|_{s=0}G dx
\\&&+\int_{\mathbb{R}^n}[q(\frac{4-n}{2}t_0+\frac{1}{4}|x-x_0|^2)+\frac{1}{2}t_0<V,x-x_0>]|F|^2\frac{\partial G_s}{\partial s}|_{s=0}dx
\\&&-\int_{\mathbb{R}^n}2t_0^2<\frac{\partial}{\partial s}|_{s=0}(J_s-\frac{X_s}{2t_s}),\theta>G dx.
\end{eqnarray*}
Note that
\begin{eqnarray*}
\frac{\partial |F_s|^2}{\partial s}|_{s=0}&=&F_{ij\beta}^\alpha(\nabla_i\theta_{j\beta}^\alpha-\nabla_j\theta_{i\beta}^\alpha)
=2F_{ij\beta}^\alpha \nabla_i\theta_{j\beta}^\alpha,
\end{eqnarray*}
$$\frac{\partial G_s}{\partial s}|_{s=0}=(-\frac{n}{2}\frac{q}{t_0}+\frac{q|x-x_0|^2}{4t_0^2}+\frac{<V,x-x_0>}{2t_0})G(x),$$
\begin{eqnarray*}
\frac{\partial}{\partial s}|_{s=0}J_{j\beta}^\alpha
=\nabla_p\nabla_p\theta_{j\beta}^\alpha-\nabla_p\nabla_j\theta_{p\beta}^\alpha
+\theta_{p\gamma}^\alpha F_{pj\beta}^\gamma-F_{pj\gamma}^\alpha \theta_{p\beta}^\gamma,
\end{eqnarray*}
\begin{eqnarray*}
\frac{\partial}{\partial s}|_{s=0}(-\frac{1}{2t_s}X_{j\beta}^\alpha)
&=&\frac{q}{2t_0^2}X_{j\beta}^\alpha+\frac{1}{2t_0}V^kF_{kj\beta}^\alpha
-\frac{1}{2t_0}(x-x_0)^k(\nabla_k\theta_{j\beta}^\alpha-\nabla_j\theta_{k\beta}^\alpha).
\end{eqnarray*}
Thus we get
\begin{eqnarray*}
\mathcal{F}_{x_0,t_0}''(q,V,\theta)&=&
\int_{\mathbb{R}^n}[q(\frac{4-n}{2}q-\frac{1}{2}<x-x_0,V>)-\frac{1}{2}t_0|V|^2]|F|^2G dx
\\&&+\int_{\mathbb{R}^n}[q(\frac{4-n}{2}t_0+\frac{1}{4}|x-x_0|^2)+\frac{1}{2}t_0<V,x-x_0>]2F_{ij\beta}^\alpha \nabla_i\theta_{j\beta}^\alpha G dx
\\&&+\int_{\mathbb{R}^n}[q(\frac{4-n}{2}t_0+\frac{1}{4}|x-x_0|^2)+\frac{1}{2}t_0<V,x-x_0>]|F|^2
\\&&\quad \times (-\frac{n}{2}\frac{q}{t_0}+\frac{q|x-x_0|^2}{4t_0^2}+\frac{<V,x-x_0>}{2t_0})G dx
\\&&-\int_{\mathbb{R}^n}2t_0^2[\nabla_p(\nabla_p\theta_{j\beta}^\alpha-\nabla_j\theta_{p\beta}^\alpha)
+\theta_{p\gamma}^\alpha F_{pj\beta}^\gamma-F_{pj\gamma}^\alpha \theta_{p\beta}^\gamma]\theta_{j\beta}^\alpha G dx
\\&&-\int_{\mathbb{R}^n}2t_0^2[\frac{q}{2t_0^2}X_{j\beta}^\alpha+\frac{1}{2t_0}V^kF_{kj\beta}^\alpha
-\frac{1}{2t_0}(x-x_0)^k(\nabla_k\theta_{j\beta}^\alpha-\nabla_j\theta_{k\beta}^\alpha)]\theta_{j\beta}^\alpha G dx.
\end{eqnarray*}
By integration by parts, we have
\begin{eqnarray*}
&&\int_{\mathbb{R}^n}[q(\frac{4-n}{2}t_0+\frac{1}{4}|x-x_0|^2)+\frac{1}{2}t_0<V,x-x_0>]2F_{ij\beta}^\alpha \nabla_i\theta_{j\beta}^\alpha G dx
\\&=&\int_{\mathbb{R}^n}-2[q(\frac{4-n}{2}t_0+\frac{1}{4}|x-x_0|^2)+\frac{1}{2}t_0<V,x-x_0>]<J-\frac{1}{2t_0}X,\theta> G dx
\\&&-\int_{\mathbb{R}^n}2[\frac{1}{2}q(x-x_0)^i+\frac{1}{2}t_0V^i]F_{ij\beta}^\alpha \theta_{j\beta}^\alpha G dx
\\&=&\int_{\mathbb{R}^n}[-q(x-x_0)^i-t_0V^i]F_{ij\beta}^\alpha \theta_{j\beta}^\alpha G dx.
\end{eqnarray*}
Then by using Lemma \ref{identities}, we have
\begin{eqnarray*}
\mathcal{F}_{x_0,t_0}''(q,V,\theta)&=&
\int_{\mathbb{R}^n}[\frac{4-n}{2}q^2-\frac{1}{2}t_0|V|^2]|F|^2G dx
\\&&+\int_{\mathbb{R}^n}[-q(x-x_0)^i-t_0V^i]F_{ij\beta}^\alpha \theta_{j\beta}^\alpha G dx
\\&&+\int_{\mathbb{R}^n}[\frac{n-4}{2}q^2|F|^2-2t_0q^2|J|^2]Gdx
\\&&+\int_{\mathbb{R}^n}\frac{1}{4}(2t_0|V|^2|F|^2-4t_0<V^iF_{ij},V^pF_{pj}>)G dx
\\&&-\int_{\mathbb{R}^n}2t_0^2[\nabla_p(\nabla_p\theta_{j\beta}^\alpha-\nabla_j\theta_{p\beta}^\alpha)
+\theta_{p\gamma}^\alpha F_{pj\beta}^\gamma-F_{pj\gamma}^\alpha \theta_{p\beta}^\gamma]\theta_{j\beta}^\alpha G dx
\\&&-\int_{\mathbb{R}^n}[q(x-x_0)^i+t_0V^i]F_{ij\beta}^\alpha \theta_{j\beta}^\alpha G dx
\\&&+\int_{\mathbb{R}^n}t_0(x-x_0)^k(\nabla_k\theta_{j\beta}^\alpha-\nabla_j\theta_{k\beta}^\alpha)\theta_{j\beta}^\alpha G dx.
\end{eqnarray*}
Thus,
\begin{eqnarray*}
\mathcal{F}_{x_0,t_0}''(q,V,\theta)&=&
\int_{\mathbb{R}^n}[-2q(x-x_0)^i-2t_0V^i]F_{ij\beta}^\alpha \theta_{j\beta}^\alpha G dx
\\&&-\int_{\mathbb{R}^n}2t_0q^2|J|^2Gdx
-\int_{\mathbb{R}^n}t_0<V^iF_{ij},V^pF_{pj}>)G dx
\\&&-\int_{\mathbb{R}^n}2t_0^2[\nabla_p(\nabla_p\theta_{j\beta}^\alpha-\nabla_j\theta_{p\beta}^\alpha)
+\theta_{p\gamma}^\alpha F_{pj\beta}^\gamma-F_{pj\gamma}^\alpha \theta_{p\beta}^\gamma]\theta_{j\beta}^\alpha G dx
\\&&+\int_{\mathbb{R}^n}t_0(x-x_0)^k(\nabla_k\theta_{j\beta}^\alpha-\nabla_j\theta_{k\beta}^\alpha)\theta_{j\beta}^\alpha G dx.
\end{eqnarray*}
Note that
$$(d^\nabla)^*d^\nabla\theta_j=-\nabla_p(\nabla_p\theta_{j}-\nabla_j\theta_{p}),$$
$$\mathcal{R}(\theta_j)=[F_{pj},\theta_p]=F_{pj}\theta_p-\theta_pF_{pj},$$
$$i_{x-x_0}d^\nabla\theta_j=(x-x_0)^k(\nabla_k\theta_{j}-\nabla_j\theta_{k}),$$
so we have
\begin{eqnarray*}
\mathcal{F}_{x_0,t_0}''(q,V,\theta)&=&
\int_{\mathbb{R}^n}2t_0^2<(d^\nabla)^*d^\nabla\theta_j+\mathcal{R}(\theta_j)+i_{\frac{1}{2t_0}(x-x_0)}d^\nabla\theta_j,\theta_j>G dx
\\&&-\int_{\mathbb{R}^n}2t_0<2qJ_j+V^iF_{ij}, \theta_j>G dx
\\&&-2t_0\int_{\mathbb{R}^n}(q^2|J|^2+\frac{1}{2}|i_VF|^2)G dx.
\end{eqnarray*}
Let
$$L=-t_0[(d^\nabla)^*d^\nabla +\mathcal{R}+i_{\frac{1}{2t_0}(x-x_0)}d^\nabla],$$
then we have
\begin{eqnarray*}
\frac{1}{2t_0}\mathcal{F}_{x_0,t_0}''(q,V,\theta)&=&
\int_{\mathbb{R}^n}<-L\theta-2qJ-i_VF, \theta>G dx
-\int_{\mathbb{R}^n}(q^2|J|^2+\frac{1}{2}|i_VF|^2)G dx.
\end{eqnarray*}
\end{proof}

\section{F-stability and its characterization}

In this section we define the F-stability for homothetically shrinking solitons in $\mathcal{S}_{x_0,t_0}$.
The operator $L$ admits eigenfields $J$ and $i_VF$ of eigenvalues $-1$ and $-\frac{1}{2}$, respectively.
F-stability is equivalent to the semi-positiveness of $L$ modulo the vector space spanned by $J$ and $i_VF$.
Let $C_0^\infty(\Omega^1\otimes \mathfrak{g})$, or simply $C_0^\infty$, denote the space of
$\mathfrak{g}$-valued $1$-forms with compact supports on $\mathbb{R}^n$.
The space $C_0^\infty$ is dense in $W_G^{2,2}$.

\begin{definition}
A homothetically shrinking soliton $A\in \mathcal{S}_{x_0,t_0}$ is called F-stable
if for any $\theta$ in $C_0^\infty$, or equivalently in $W_G^{2,2}$,
there exist a real number $q$ and a vector $V$ such that
$$\mathcal{F}_{x_0,t_0}''(q,V,\theta)\geq 0.$$
\end{definition}

Given a homothetically shrinking soliton $A\in \mathcal{S}_{x_0,t_0}$ with an exponential gauge, the rescaling
$$\widetilde{A}_i(x)=\sqrt{t_0}A_i(\sqrt{t_0}x+x_0)$$
is a homothetically shrinking soliton in $\mathcal{S}_{0,1}$.
Without loss of generality, in the remaining of this section we let $x_0=0$ and $t_0=1$.
Then
$$G(x)=(4\pi)^{-\frac{n}{2}}e^{-\frac{|x|^2}{4}}$$
and
$$L\theta=-[(d^\nabla)^*d^\nabla\theta+\mathcal{R}(\theta)+i_{\frac{x}{2}}d^\nabla\theta].$$
The operator $L$ is self-adjoint in the following sense: for any $\theta, \eta\in W_{G}^{2,2}$,
\begin{equation}\label{Ladjoint}
\int_{\mathbb{R}^n}<L\theta,\eta>Gdx=-\int_{\mathbb{R}^n}[<d^\nabla\theta,d^\nabla\eta>+<\mathcal{R}(\theta),\eta>]Gdx
=\int_{\mathbb{R}^n}<\theta,L\eta>Gdx.
\end{equation}
A $\mathfrak{g}$-valued $1$-form $\theta\in W_G^{2,2}$ is called an eigenfield of $L$ and of eigenvalue $\lambda$ if
$L\theta=-\lambda\theta$. We denote the eigenfield space of eigenvalue $\lambda$ by $E_\lambda$.

\begin{proposition}\label{twoeigenfunctions}
Let $A$ be a homothetically shrinking soliton in $\mathcal{S}_{0,1}$.  Then
\begin{equation}\label{J}
L J= J,
\end{equation}
and
\begin{equation}\label{iVF}
L (i_V F)= \frac{1}{2}i_V F, \quad \forall V\in \mathbb{R}^n.
\end{equation}
\end{proposition}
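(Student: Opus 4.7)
The plan is to prove both identities by direct Weitzenböck-type calculations, exploiting the Bianchi identity, the curvature commutator of covariant derivatives, and the soliton equation $J = \tfrac{1}{2}i_x F$. The starting point in each case is to compute $d^\nabla \theta$ explicitly, then apply $(d^\nabla)^*$ and compare with the terms $\mathcal{R}(\theta)$ and $i_{x/2}d^\nabla \theta$ in the definition of $L$.

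First I would treat $\theta = i_V F$ for a constant vector $V$. Using the Bianchi identity $\nabla_i F_{pj} + \nabla_p F_{ji} + \nabla_j F_{ip} = 0$, one sees
\[
(d^\nabla (i_V F))_{ij} = V^p(\nabla_i F_{pj} - \nabla_j F_{pi}) = V^p\, \nabla_p F_{ij}.
\]
Applying $(d^\nabla)^*$ and commuting covariant derivatives with the flat base gives $[\nabla^i, \nabla_p] F_{ij} = [F_{ip}, F_{ij}]$, and hence
\[
((d^\nabla)^*d^\nabla (i_V F))_j = -V^p \nabla_p J_j - V^p [F_{ip}, F_{ij}].
\]
The second term cancels $\mathcal{R}(i_V F)_j = V^p[F_{ip},F_{ij}]$ exactly. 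For the first term, I would substitute the soliton equation $J_j = \tfrac{1}{2}x^q F_{qj}$ and differentiate: $V^p\nabla_p J_j = \tfrac{1}{2}(i_V F)_j + \tfrac{1}{2} V^p x^q \nabla_p F_{qj}$. The remaining $\tfrac{1}{2}V^p x^q \nabla_p F_{qj}$ is precisely cancelled by $i_{x/2}d^\nabla(i_V F)$, and one is left with $L(i_V F) = \tfrac{1}{2}i_V F$.

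For $LJ = J$, I would use $J = \tfrac{1}{2}i_x F$ and carry out the same calculation with the radial vector field in place of a constant $V$; the difference is that $\nabla_i(\tfrac{1}{2}x^p) = \tfrac{1}{2}\delta_i^p$ contributes an additional $F_{ij}$ inside $d^\nabla J$, producing $(d^\nabla J)_{ij} = F_{ij} + \tfrac{1}{2} x^p \nabla_p F_{ij}$. Then $(d^\nabla)^*d^\nabla J_j$ yields $-\tfrac{3}{2}J_j - \tfrac{1}{2}x^p \nabla^i\nabla_p F_{ij}$; commuting derivatives again produces a bracket term that cancels $\mathcal{R}(J)_j = \tfrac{1}{2}x^p[F_{ip},F_{ij}]$, while $x^p \nabla_p J$ generates a further $J_j$ plus a radial second-order term that is cancelled by $i_{x/2}d^\nabla J$. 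Collecting the $J_j$ contributions gives $-LJ_j = -2J_j + J_j = -J_j$, i.e.\ $LJ=J$.

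The main obstacle is purely bookkeeping: one must track signs carefully across four contributions (two from $(d^\nabla)^*d^\nabla$ via Bianchi and the curvature commutator, one from $\mathcal{R}$, and one from $i_{x/2}d^\nabla$), and in the $LJ$ calculation the extra $\delta$-contractions produced by $\nabla x = I$ must be combined with the soliton equation in exactly the right way so that all higher-order derivatives of $F$ drop out. Apart from this, the identities are completely formal consequences of the soliton equation, Bianchi, and flatness of $\mathbb{R}^n$; no analytic input is needed.
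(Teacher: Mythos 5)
Your proposal is correct and follows essentially the same route as the paper: a direct Weitzenb\"ock computation using the Bianchi identity, the curvature commutator $[\nabla_i,\nabla_p]\cdot=[F_{ip},\cdot]$, and the soliton equation $J=\tfrac12 i_xF$. Your only organizational difference is to apply Bianchi \emph{first} to get the tidy formulas $(d^\nabla(i_VF))_{ij}=V^p\nabla_p F_{ij}$ and $(d^\nabla J)_{ij}=F_{ij}+\tfrac12 x^p\nabla_pF_{ij}$ before applying $(d^\nabla)^*$, whereas the paper expands $\nabla_p\nabla_pJ_j$ and $\nabla_p\nabla_jJ_p$ separately; your grouping makes the final cancellation of the second-order radial terms slightly more transparent (they coincide after relabelling dummy indices), but the content is the same.
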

\begin{proof} Note that
$$J_{j}=\nabla_pF_{pj}=\frac{1}{2}x^pF_{pj},$$
$$L=-(d^\nabla)^*d^\nabla-\mathcal{R}-i_{\frac{x}{2}}d^\nabla,$$
and
\begin{eqnarray*}
LJ_j&=&\nabla_p\nabla_pJ_j-\nabla_p\nabla_jJ_p-[F_{pj},J_p]-\frac{1}{2}(d^\nabla J)(x^p\partial_p, \partial_j)
\\&=&\nabla_p\nabla_pJ_j-\nabla_p\nabla_jJ_p-[F_{pj},J_p]-\frac{1}{2}x^p(\nabla_pJ_j-\nabla_jJ_p).
\end{eqnarray*}
We have
$$\nabla_pJ_j=\nabla_p(\frac{1}{2}x^qF_{qj})=\frac{1}{2}F_{pj}+\frac{1}{2}x^q\nabla_pF_{qj},$$
then
\begin{eqnarray*}
\nabla_p\nabla_jJ_p&=&\nabla_p(-\frac{1}{2}F_{pj}+\frac{1}{2}x^q\nabla_jF_{qp})
=-\frac{1}{2}J_j-\frac{1}{2}x^q\nabla_p\nabla_jF_{pq},
\end{eqnarray*}
and by using the Bianchi identity and the Ricci formula, we get
\begin{eqnarray*}
\nabla_p\nabla_pJ_j&=&\nabla_pF_{pj}+\frac{1}{2}x^q\nabla_p\nabla_pF_{qj}
\\&=&\nabla_pF_{pj}+\frac{1}{2}x^q\nabla_p(-\nabla_qF_{jp}-\nabla_jF_{pq})
\\&=&\nabla_pF_{pj}-\frac{1}{2}x^q(\nabla_q\nabla_pF_{jp}+F_{pq}F_{jp}-F_{jp}F_{pq})-\frac{1}{2}x^q\nabla_p\nabla_jF_{pq}
\\&=&J_j+\frac{1}{2}x^q\nabla_qJ_j+[J_p, F_{jp}]-\frac{1}{2}x^q\nabla_p\nabla_jF_{pq}.
\end{eqnarray*}
Hence
$$LJ_j=\frac{3}{2}J_j+\frac{1}{2}x^p\nabla_jJ_p.$$
The identity (\ref{J}) then follows from
\begin{eqnarray*}
\frac{1}{2}x^p\nabla_jJ_p&=&\frac{1}{2}\nabla_j(x^pJ_p)-\frac{1}{2}J_j
=\frac{1}{2}\nabla_j(x^p\frac{1}{2}x^qF_{qp})-\frac{1}{2}J_j
=-\frac{1}{2}J_j.
\end{eqnarray*}

We now prove (\ref{iVF}). By using the Bianchi identity and the Ricci formula, we get
\begin{eqnarray*}
\nabla_p\nabla_p(V^qF_{qj})&=&V^q\nabla_p(-\nabla_qF_{jp}-\nabla_jF_{pq})
\\&=&-V^q(\nabla_q\nabla_pF_{jp}+F_{pq}F_{jp}-F_{jp}F_{pq})-V^q\nabla_p\nabla_jF_{pq}
\\&=&V^q\nabla_q(\frac{1}{2}x^pF_{pj})+[V^qF_{qp},F_{jp}]+\nabla_p\nabla_j(V^qF_{qp}),
\end{eqnarray*}
hence
\begin{eqnarray*}
L(V^qF_{qj})&=&\nabla_p\nabla_p(V^qF_{qj})-\nabla_p\nabla_j(V^qF_{qp})-[F_{pj},V^qF_{qp}]
\\&&-\frac{1}{2}x^p[\nabla_p(V^qF_{qj})-\nabla_j(V^qF_{qp})]
\\&=&V^q\nabla_q(\frac{1}{2}x^pF_{pj})-\frac{1}{2}x^p[\nabla_p(V^qF_{qj})-\nabla_j(V^qF_{qp})]
\\&=&\frac{1}{2}V^qF_{qj}+\frac{1}{2}x^pV^q(\nabla_qF_{pj}+\nabla_pF_{jq}+\nabla_jF_{qp})
\\&=&\frac{1}{2}V^qF_{qj}.
\end{eqnarray*}
\end{proof}

\begin{corollary}\label{gap}
Let $A$ be a homothetically shrinking soliton in $\mathcal{S}_{0,1}$.
If $|F|^2 < \frac{n}{2(n-1)} $, then $(E, A)$ is flat.
\end{corollary}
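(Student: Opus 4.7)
The strategy is a drifted Bochner identity for $|F|^2$ integrated against the Gaussian weight $G(x)=(4\pi)^{-n/2}e^{-|x|^2/4}$, followed by a pointwise Bourguignon--Lawson-type cubic curvature estimate. This runs parallel to the derivation already used to establish the eigenfield relation $LJ=J$ in (\ref{J0}): starting from the Bianchi identity $d^\nabla F=0$ (which gives $\nabla_p F_{ij}=\nabla_i F_{pj}+\nabla_j F_{ip}$), the Ricci commutator $[\nabla_p,\nabla_i]F_{jk}=[F_{pi},F_{jk}]$ on flat $\mathbb{R}^n$, and the soliton equation $\nabla^p F_{pj}=\tfrac{1}{2}x^p F_{pj}$, I would compute the pointwise Weitzenb\"ock identity
\[
\Delta F_{ij}=F_{ij}+\tfrac{1}{2}\,x^p\nabla_p F_{ij}+2[F_{pi},F_{pj}].
\]

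Pairing with $F_{ij}$, summing over $i,j$, and using $\Delta|F|^2=\langle\Delta F_{ij},F_{ij}\rangle+2|\nabla F|^2$, this becomes the scalar drifted identity
\[
\Delta|F|^2-\tfrac{1}{2}x\cdot\nabla|F|^2=2|\nabla F|^2+2|F|^2+2T,\qquad T:=\sum_{i,j,p}\langle[F_{pi},F_{pj}],F_{ij}\rangle.
\]
Now multiply by $G$ and integrate. Since $G\bigl(\Delta u-\tfrac{1}{2}x\cdot\nabla u\bigr)=\mathrm{div}(G\nabla u)$, the left side integrates to zero over $\mathbb{R}^n$; the boundary contributions at infinity vanish because $A\in\mathcal{S}_{0,1}$ supplies uniform bounds on $|F|$ and $|\nabla F|$ that are easily dominated by the Gaussian decay of $G$. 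The outcome is the key integral identity
\[
\int_{\mathbb{R}^n}\bigl(|\nabla F|^2+|F|^2+T\bigr)\,G\,dx=0.
\]

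To finish, I would invoke the sharp pointwise Bourguignon--Lawson-type estimate
\[
|T|\leq\sqrt{\tfrac{2(n-1)}{n}}\,|F|^3,
\]
valid for $\mathfrak{g}$-valued $2$-forms on $\mathbb{R}^n$ whenever $\mathfrak{g}\subset\mathfrak{so}(r)$. Substituting into the previous identity and discarding the nonnegative term $\int|\nabla F|^2G\,dx$,
\[
\int_{\mathbb{R}^n}|F|^2\,G\,dx\leq\sqrt{\tfrac{2(n-1)}{n}}\,\sup_{\mathbb{R}^n}|F|\int_{\mathbb{R}^n}|F|^2\,G\,dx.
\]
Under the hypothesis $|F|^2<n/(2(n-1))$ the coefficient on the right is strictly less than $1$, forcing $\int|F|^2\,G\,dx=0$; since $G>0$ everywhere, $F\equiv0$ and $(E,A)$ is flat.

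The main obstacle is the pointwise cubic estimate on $T$: a purely Lie-algebraic inequality whose sharp $n$-dependent constant is precisely what matches the hypothesis. Every other step is a routine Weitzenb\"ock computation combined with integration-by-parts against the Gaussian weight, justified by the uniform bounds built into the definition of $\mathcal{S}_{0,1}$.
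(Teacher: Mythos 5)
The paper's proof of this corollary works with the $\mathfrak{g}$-valued $1$-form $J=(d^\nabla)^*F$, not with $F$ directly. Using the eigenfield identity $LJ=J$ from Proposition~\ref{twoeigenfunctions} together with the self-adjointness formula~(\ref{Ladjoint}), the paper obtains
\[
\int_{\mathbb{R}^n}|d^\nabla J|^2\,G\,dx = -\int_{\mathbb{R}^n}|J|^2\,G\,dx - \int_{\mathbb{R}^n}\langle[F_{ij},J_i],J_j\rangle\,G\,dx,
\]
then estimates the curvature term pointwise by $\sqrt{\tfrac{2(n-1)}{n}}\,|F|\,|J|^2$ (using $|[B,C]|\le|B||C|$, Cauchy--Schwarz, and $\sum_k|J_k|^4\ge|J|^4/n$), deduces $J\equiv 0$, and \emph{then} argues separately via Lemma~\ref{identities}(a) applied at every $t_0>0$ that $J\equiv 0$ forces $F\equiv 0$. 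Your proposal instead passes through a drifted Weitzenb\"ock identity for the $2$-form $F$ itself. The identity $\Delta F_{ij}=F_{ij}+\tfrac12 x^p\nabla_pF_{ij}+2[F_{pi},F_{pj}]$ you derive from Bianchi, the Ricci commutator and the soliton equation is correct, and so is the integrated identity $\int(|\nabla F|^2+|F|^2+T)G\,dx=0$.

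The gap is the asserted pointwise estimate $|T|\le\sqrt{\tfrac{2(n-1)}{n}}\,|F|^3$ for $T=\sum_{p,i,j}\langle[F_{pi},F_{pj}],F_{ij}\rangle$. This is \emph{not} the Bourguignon--Lawson estimate the paper uses: that one bounds $\langle[F_{ij},J_i],J_j\rangle$, a pairing of one $2$-form against two copies of a $1$-form, and the constant $\sqrt{2(n-1)/n}$ comes precisely from the fact that the $1$-form $J$ has $n$ components. Your $T$ pairs three copies of the $2$-form $F$, and the combinatorics of the index sum is different. If you apply the exact same chain of inequalities (Lemma~2.30 of \cite{BourguignonLawson}, Cauchy--Schwarz over $i<j$ for each fixed $p$, and the power-mean inequality on $\sum_i|F_{pi}|^4$ with $n-1$ nonzero terms, then sum over $p$ using $\sum_p\sum_i|F_{pi}|^2=2|F|^2$), you only get $|T|\le 2\sqrt{\tfrac{2(n-2)}{n-1}}\,|F|^3$. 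With that constant and the hypothesis $|F|^2<n/(2(n-1))$ the coefficient in your final display is not less than $1$, and the argument does not close; it would only give a gap theorem under a strictly smaller curvature bound. So either you need to prove the sharper cubic estimate on $T$ (which you neither prove nor cite precisely; it is not an obvious consequence of Bourguignon--Lawson Lemma~2.30), or you should follow the paper's route through $J$, where the needed algebraic estimate is genuinely available with the advertised constant and the deduction $J\equiv0\Rightarrow F\equiv0$ is handled by the scaling argument in Lemma~\ref{identities}(a).
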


\begin{proof}
Note that  $J_j = \nabla^p F_{pj}=\frac{1}{2} x^p F_{pj}$. By integration by parts, we have
\begin{eqnarray*}
\int_{ \mathbb{R}^n} <(d^\nabla)^*d^\nabla J +  i_{ \frac{x}{2}} d^\nabla J ,  J >G dx = \int_{\mathbb{R}^n} |d^\nabla J|^2 Gdx.
\end{eqnarray*}
On the other hand by (\ref{J}), we have
\begin{eqnarray*}
\int_{ \mathbb{R}^n} <(d^\nabla)^*d^\nabla J +  i_{ \frac{x}{2}} d^\nabla J ,  J >G dx &=& \int_{ \mathbb{R}^n} <- LJ - \mathcal{R}(J), J>Gdx
\\&=& - \int_{\mathbb{R}^n}  |J|^2  G dx- \int_{ \mathbb{R}^n}  <[ F_{ij} , J_i] ,  J_j > G dx.
\end{eqnarray*}
For any $B, C\in so(r)$, we have $|[B,C]|\leq |B||C|$, see Lemma 2.30 in \cite{BourguignonLawson}.
Hence
\begin{eqnarray*}
|<[ F_{ij} , J_i] ,  J_j >|&\leq &|F_{ij}| |J_i| |J_j|=2\sum_{i<j}|F_{ij}| |J_i| |J_j|
\\&\leq&2\sqrt{\sum_{i<j}|F_{ij}|^2}\sqrt{\frac{1}{2}(|J|^4-\sum_k|J_k|^4)}
\\&\leq&2|F|\sqrt{\frac{1}{2}(1-\frac{1}{n})|J|^4}
\\&=&\sqrt{\frac{2(n-1)}{n}}|F| |J|^2
\end{eqnarray*}
and
$$ \int_{ \mathbb{R}^n}  |d^\nabla J|^2 Gdx \leq \int_{ \mathbb{R}^n}(\sqrt{\frac{2(n-1)}{n}}|F|-1) |J|^2  G dx.$$
If $ |F|^2 <  \frac{n}{2(n-1)}$, one then gets $J=0$.
Note that if $A\in \mathcal{S}_{0,1}$ has $J=0$, then for any $t_0>0$ we have $J=\frac{1}{2t_0}X$.
Hence Lemma \ref{identities} (a) holds for any $t_0>0$ and $F$ vanishes.
\end{proof}

\begin{theorem}\label{CharFstable}
Let $A$ be a homothetically shrinking soliton in $\mathcal{S}_{0,1}$.
Then it is F-stable if and only if the following properties are satisfied
\begin{itemize}
\item[(1)] $E_{-1} = \{ cJ, \quad c\in\mathbb{R} \} $;
\item[(2)]  $E_{- \frac{1}{2}} = \{ i_V F, \quad V \in \mathbb{R}^n \} $;
\item[(3)]  $E_{\lambda} = \{0\},  \mbox{ for any }  \lambda <0   \mbox{ and }  \lambda \neq -1,\, - \frac{1}{2} .$
\end{itemize}
\end{theorem}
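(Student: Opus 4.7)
The plan is to recast the second variation (\ref{secondvariation}) in terms of a single quadratic form by absorbing the $q$ and $V$ parameters into the test field. More precisely, I claim the identity
\begin{equation}\label{keyidentity}
\tfrac{1}{2}\mathcal{F}_{0,1}''(q,V,\theta) \;=\; \int_{\mathbb{R}^n}\langle -L\Theta,\Theta\rangle\,G\,dx,\qquad \Theta:=\theta+qJ+i_V F.
\end{equation}
Once (\ref{keyidentity}) is in hand, F-stability becomes the statement that $\int\langle -L\Theta,\Theta\rangle G\,dx \geq 0$ on a certain codimension-$(n+1)$ subspace of $W_G^{2,2}$, and the spectral picture (1)--(3) follows from the standard characterization of the spectrum of the self-adjoint operator $L$.

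To prove (\ref{keyidentity}) I expand the right-hand side using bilinearity and the self-adjointness (\ref{Ladjoint}). The diagonal pieces $\int\langle -LJ,J\rangle G\,dx = -\int|J|^2 G\,dx$ and $\int\langle -L(i_V F),i_V F\rangle G\,dx = -\tfrac{1}{2}\int|i_V F|^2 G\,dx$ (by Proposition \ref{twoeigenfunctions}) produce the $-q^2\int|J|^2 G\,dx$ and $-\tfrac12\int|i_V F|^2 G\,dx$ terms appearing in (\ref{secondvariation}). The cross terms with $\theta$ give $-2q\int\langle J,\theta\rangle G\,dx$ and $-\int\langle i_V F,\theta\rangle G\,dx$ after moving $-L$ onto $J$ or $i_V F$ via (\ref{Ladjoint}) and applying Proposition \ref{twoeigenfunctions}. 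The only remaining cross term, $-2q\int\langle J,i_V F\rangle G\,dx$, vanishes: self-adjointness of $L$ combined with $LJ=J$ and $L(i_V F)=\tfrac12 i_V F$ forces $\int\langle J,i_V F\rangle G\,dx = -\int\langle LJ,i_V F\rangle G\,dx = -\int\langle J,L(i_V F)\rangle G\,dx = -\tfrac12\int\langle J,i_V F\rangle G\,dx$, so $\int\langle J,i_V F\rangle G\,dx = 0$.

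For the direction $(\Leftarrow)$, given $\theta\in W_G^{2,2}$ I choose $q$ and $V$ so that $\Theta=\theta+qJ+i_V F$ is $L^2_G$-orthogonal to $J$ and to every $i_W F$; this is an $(n+1)\times(n+1)$ linear system whose Gram matrix is non-degenerate precisely when $J\neq 0$ and the soliton does not descend, the descending and flat cases being handled separately. Assumptions (1) and (2) then give $\Theta\perp E_{-1}\oplus E_{-1/2}$ in $L^2_G$, and assumption (3) together with the spectral decomposition of $L$ on $W_G^{2,2}$ yields $\int\langle -L\Theta,\Theta\rangle G\,dx\geq 0$, so (\ref{keyidentity}) gives $\mathcal{F}''(q,V,\theta)\geq 0$. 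For $(\Rightarrow)$ I argue by contrapositive: a putative $\psi\in E_{-1}\setminus\mathbb{R}J$, after subtracting its $J$-component so that $\psi\perp J$ in $L^2_G$, when substituted into (\ref{keyidentity}) satisfies $\tfrac12 \mathcal{F}''(q,V,\psi) = -\int|\psi|^2 G\,dx - q^2\int|J|^2 G\,dx - \tfrac12\int|i_V F|^2 G\,dx \leq -\int|\psi|^2 G\,dx < 0$ for every $q,V$, contradicting F-stability. Violations of (2) and (3) are treated identically, using that $E_\lambda\perp E_\mu$ in $L^2_G$ whenever $\lambda\neq\mu$, so that any $\psi\in E_\lambda$ with $\lambda\neq -1,-\tfrac12$ is automatically orthogonal to both $J$ and every $i_W F$.

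The main technical input, and the step that needs the most care, is the spectral side of $(\Leftarrow)$: non-negativity of $\int\langle -L\eta,\eta\rangle G\,dx$ for every $\eta\in W_G^{2,2}$ orthogonal to $E_{-1}\oplus E_{-1/2}$. Since $L$ is a drift-Laplacian type operator with bounded zeroth-order term $\mathcal{R}$ acting on bundle-valued $1$-forms on the Gaussian-weighted Hilbert space $L^2_G$, the Ornstein--Uhlenbeck framework (as in \cite{ColdingMinicozzi}) supplies a discrete spectrum and an $L^2_G$-orthonormal basis of eigenfields, from which the required Rayleigh-quotient bound on the orthogonal complement follows at once. I would invoke this framework rather than redevelop it.
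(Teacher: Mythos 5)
Your argument is correct and follows essentially the same route as the paper. The identity $\tfrac12\mathcal{F}_{0,1}''(q,V,\theta)=\int\langle -L\Theta,\Theta\rangle G\,dx$ with $\Theta=\theta+qJ+i_VF$ is a clean repackaging of the paper's computation, which decomposes $\theta=aJ+i_WF+\widetilde\theta$, uses Proposition \ref{twoeigenfunctions} and the self-adjointness (\ref{Ladjoint}) exactly as you do, and then chooses $q=-a$, $V=-W$ (so $\Theta=\widetilde\theta$) before concluding by the same spectral considerations.
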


\begin{proof}
Let  $\theta$ be a $\mathfrak{g}$-value $1$-form in $W^{2,2}_{G}$ of the form
$$ \theta = a  J + i_{W}F + \widetilde{\theta}, \quad a \in \mathbb{R}, W \in \mathbb{R}^n$$
and satisfying
$$\int_{\mathbb{R}^n} <\widetilde{\theta},J>Gdx=  \int_{ \mathbb{R}^n} <\widetilde{\theta},i_VF>Gdx = 0,  \quad \forall V \in \mathbb{R}^n.$$
Then it follows from Proposition \ref{secondvariation0}, Proposition \ref{twoeigenfunctions} and (\ref{Ladjoint}) that
\begin{eqnarray*}
\frac{1}{2} \mathcal{F}^{''}_{0,1} (q,V, \theta) &= &\int_{\mathbb{R}^n}<- L\theta - 2qJ - i_VF,\theta> Gdx
- \int_{\mathbb{R}^n} ( q^2 |J|^2 + \frac{1}{2}  |i_V F|^2) Gdx
\\&=& \int_{\mathbb{R}^n} <-a J -\frac{1}{2} i_W F - L \widetilde{\theta}, aJ + i_WF +\widetilde{\theta} > Gdx
\\&&+ \int_{ \mathbb{R}^n}  < - 2qJ - i_VF, aJ + i_W F +\widetilde{\theta}> G dx
\\&& -  \int_{ \mathbb{R}^n} ( q^2 |J|^2 + \frac{1}{2}  |i_V F|^2) G dx
\\&=&- (a + q)^2  \int_{\mathbb{R}^n}  |J|^2Gdx - \frac{1}{2}  \int_{ \mathbb{R}^n}|i_{V+W} F|^2   Gdx
\\&&+ \int_{ \mathbb{R}^n} <- L\widetilde{\theta} , \widetilde{\theta}> G dx
\end{eqnarray*}
Let $ q =-a$, $V= -W$, one has the equivalence.
\end{proof}

\section{Entropy and entropy-stability}

We now introduce $\lambda$-entropy of connections on the trivial $G$-vector bundle $E$ over $\mathbb{R}^n$.
We shall show that along the Yang-Mills flow, the entropy is non-increasing. We also prove that the entropy of a homothetically
shrinking soliton $A(x)\in \mathcal{S}_{x_0,t_0}$ is achieved exactly by $\mathcal{F}_{x_0,t_0}(A)$, provided that
$i_VF\neq 0$ for any non-zero vector $V\in \mathbb{R}^n$.

\begin{definition}
Let $A(x)$ be a connection on $E$. We define the entropy by
\begin{equation}\label{entropy}
\lambda(A)=\sup_{x_0\in \mathbb{R}^n,t_0>0}\mathcal{F}_{x_0,t_0}(A).
\end{equation}
\end{definition}

We first consider the invariance property of the entropy.
\begin{proposition}\label{invariantproperty}
The entropy $\lambda$ is invariant under translations and rescalings.
\end{proposition}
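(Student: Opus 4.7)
The plan is to reduce the proposition to the two pointwise identities already highlighted in the introduction:
$\mathcal{F}_{x_0-x_1,t_0}(\widetilde{A})=\mathcal{F}_{x_0,t_0}(A)$ for a translation $\widetilde{A}_i(x)=A_i(x+x_1)$, and
$\mathcal{F}_{cx_0,c^2t_0}(A^c)=\mathcal{F}_{x_0,t_0}(A)$ for a rescaling $A^c_i(x)=c^{-1}A_i(c^{-1}x)$. Once these are in hand, the invariance of $\lambda$ follows at once by reparametrizing the supremum in the definition (\ref{entropy}).

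The translation case is essentially a change of variables. Since the curvature depends on $A$ only through $\partial_iA_j-\partial_jA_i+[A_i,A_j]$, the chain rule yields $\widetilde{F}_{ij}(x)=F_{ij}(x+x_1)$. Plugging this into (\ref{Ffunctional}) and substituting $y=x+x_1$ converts the Gaussian weight $e^{-|x-(x_0-x_1)|^2/(4t_0)}$ into $e^{-|y-x_0|^2/(4t_0)}$, giving the identity immediately.

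For the rescaling I would first compute $F^c_{ij}(x)=c^{-2}F_{ij}(c^{-1}x)$, so that $|F^c(x)|^2=c^{-4}|F(c^{-1}x)|^2$. Setting $y=c^{-1}x$ (Jacobian $c^n$) in the integral defining $\mathcal{F}_{cx_0,c^2t_0}(A^c)$, the prefactor telescopes as $(c^2t_0)^2\cdot c^{-4}\cdot(4\pi c^2t_0)^{-n/2}\cdot c^n=t_0^2(4\pi t_0)^{-n/2}$, while the exponent simplifies to $-|y-x_0|^2/(4t_0)$, recovering $\mathcal{F}_{x_0,t_0}(A)$ exactly.

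Finally, since the maps $(x_0,t_0)\mapsto(x_0-x_1,t_0)$ and $(x_0,t_0)\mapsto(cx_0,c^2t_0)$ are bijections of $\mathbb{R}^n\times(0,+\infty)$, taking suprema on both sides of the two identities yields $\lambda(\widetilde{A})=\lambda(A)$ and $\lambda(A^c)=\lambda(A)$. I do not expect any genuine obstacle here: the whole argument is a careful bookkeeping of the Jacobian and the exponent under the Gaussian weight, and no integrability issue arises because the supremum is merely reindexed rather than re-examined.
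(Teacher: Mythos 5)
Your proposal is correct and matches the paper's own proof: both establish the pointwise identities $\mathcal{F}_{x_0-x_1,t_0}(\widetilde{A})=\mathcal{F}_{x_0,t_0}(A)$ and $\mathcal{F}_{cx_0,c^2t_0}(A^c)=\mathcal{F}_{x_0,t_0}(A)$ by direct change of variables with the same bookkeeping of the Jacobian, the prefactor, and the Gaussian exponent, and then conclude by reparametrizing the supremum. No substantive difference.
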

\begin{proof}
Let $A(x)$ be a connection on $E$. A translation of $A(x)$ is a new connection, denoted by $\widetilde{A}(x)$, of the form
$$\widetilde{A}_i(x)=A_i(x+x_1),$$
where $x_1$ is a point in $\mathbb{R}^n$. For any $x_0\in \mathbb{R}^n$ and $t_0>0$, we have
$$\mathcal{F}_{x_0-x_1,t_0}(\widetilde{A})=\mathcal{F}_{x_0,t_0}(A).$$
Hence
$$\lambda(\widetilde{A})=\lambda(A).$$

A rescaling of $A(x)$ is a new connection, denoted by $A^c(x)$ of the form
$$A_i^c(x)=c^{-1}A_i(c^{-1}x),$$
where $c$ is a positive number.
Then, by setting $y=c^{-1}x$, we have
\begin{eqnarray*}
\mathcal{F}_{cx_0,c^2t_0}(A^c)&=&(c^2t_0)^2\int_{\mathbb{R}^n}|F^c(x)|^2(4\pi c^2t_0)^{-\frac{n}{2}}e^{-\frac{|x-cx_0|^2}{4c^2t_0}}dx
\\&=&(c^2t_0)^2\int_{\mathbb{R}^n}c^{-4}|F(c^{-1}x)|^2(4\pi c^2t_0)^{-\frac{n}{2}}e^{-\frac{|x-cx_0|^2}{4c^2t_0}}dx
\\&=&(c^2t_0)^2\int_{\mathbb{R}^n}c^{-4}|F(y)|^2(4\pi c^2t_0)^{-\frac{n}{2}}e^{-\frac{|cy-cx_0|^2}{4c^2t_0}}c^ndy
\\&=&t_0^2\int_{\mathbb{R}^n}|F(y)|^2(4\pi t_0)^{-\frac{n}{2}}e^{-\frac{|y-x_0|^2}{4t_0}}dy
\\&=&\mathcal{F}_{x_0,t_0}(A).
\end{eqnarray*}
Hence
$$\lambda(A^c)=\lambda(A).$$
\end{proof}

In the case that $A(x)$ is a homothetically shrinking soliton, Proposition \ref{invariantproperty}
explains why in Theorem \ref{CharFstable} $J$ and $i_VF$ do not violate the F-stability.

\begin{proposition}
Let $A(x,t)$ be a solution to the Yang-Mills flow on $E$.
Then the entropy $\lambda(A(x,t))$ is non-increasing in $t$.
\end{proposition}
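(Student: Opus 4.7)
The plan is to deduce this directly from the monotonicity formula for $\Phi_{x_0,t_0}$ recalled in Section~2, by re-expressing $\mathcal{F}$ at a later time as $\Phi$ at a suitably shifted reference time.

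Fix $t_1 < t_2$ in the interval of existence of the flow, and fix an arbitrary pair $(x_0, t_0) \in \mathbb{R}^n \times (0, \infty)$. I would introduce the reference time $T := t_0 + t_2$ and then observe, by direct comparison of the Gaussian weights, that
\begin{equation*}
\Phi_{x_0,T}\bigl(A(\cdot,t_2)\bigr) \;=\; \mathcal{F}_{x_0,t_0}\bigl(A(\cdot,t_2)\bigr),
\end{equation*}
and
\begin{equation*}
\Phi_{x_0,T}\bigl(A(\cdot,t_1)\bigr) \;=\; \mathcal{F}_{x_0,\, t_0 + (t_2 - t_1)}\bigl(A(\cdot,t_1)\bigr),
\end{equation*}
since $T - t_2 = t_0$ and $T - t_1 = t_0 + (t_2 - t_1) > 0$.

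Next I would invoke the monotonicity formula (\ref{monotonicityformula}), which asserts that along the Yang-Mills flow the quantity $t \mapsto \Phi_{x_0,T}(A(\cdot,t))$ is non-increasing on $[0, T)$. Applied to $t_1 < t_2 < T$ this gives
\begin{equation*}
\mathcal{F}_{x_0,t_0}\bigl(A(\cdot,t_2)\bigr) \;=\; \Phi_{x_0,T}\bigl(A(\cdot,t_2)\bigr) \;\leq\; \Phi_{x_0,T}\bigl(A(\cdot,t_1)\bigr) \;=\; \mathcal{F}_{x_0,\,t_0 + (t_2 - t_1)}\bigl(A(\cdot,t_1)\bigr) \;\leq\; \lambda\bigl(A(\cdot,t_1)\bigr),
\end{equation*}
where the last inequality is just the definition of $\lambda$ as a supremum.

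Since the resulting bound $\mathcal{F}_{x_0,t_0}(A(\cdot,t_2)) \leq \lambda(A(\cdot,t_1))$ is uniform in $(x_0, t_0)$, taking the supremum over $(x_0, t_0) \in \mathbb{R}^n \times (0,\infty)$ yields $\lambda(A(\cdot,t_2)) \leq \lambda(A(\cdot,t_1))$, which is the desired monotonicity. There is no real obstacle here: the only point to watch is the algebraic matching of the factors $(T-t)^2$ and $[4\pi(T-t)]^{-n/2}$ with $t_0^2$ and $(4\pi t_0)^{-n/2}$, which is exactly how the choice $T = t_0 + t_2$ was engineered.
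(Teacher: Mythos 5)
Your proof is correct and follows essentially the same route as the paper: relate $\mathcal{F}_{x_0,t_0}$ to $\Phi_{x_0,t_0+t}$, apply the monotonicity formula, and take a supremum over $(x_0,t_0)$. The only cosmetic difference is that the paper phrases the final step with an $\epsilon$-approximation of the supremum at time $t_2$, whereas you take the supremum directly after establishing the uniform bound; these are equivalent.
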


\begin{proof}
Let $t_1<t_2<T$. Here $T$ denotes the first singular time of the Yang-Mills flow.
By (\ref{entropy}), for any given $\epsilon>0$ there exists $(x_0,t_0)$ such that
\begin{equation}\label{approximateentropy}
\lambda(A(x,t_2))-\epsilon \leq \mathcal{F}_{x_0,t_0}(A(x,t_2)).
\end{equation}
Note that for any $c>0$ and $0\leq t<T$, we have
\begin{equation}\label{relation}
\mathcal{F}_{x_0,c}(A(x,t))=\Phi_{x_0,c+t}(A(x,t)).
\end{equation}
By (\ref{relation}), the monotonicity formula (\ref{monotonicityformula}),  and the definition of entropy, we have
\begin{eqnarray*}
\mathcal{F}_{x_0,t_0}(A(x,t_2))&=&\Phi_{x_0,t_0+t_2}(A(x,t_2))
\\&\leq& \Phi_{x_0,t_0+t_2}(A(x,t_1))= \mathcal{F}_{x_0,t_0+t_2-t_1}(A(x,t_1))
\\&\leq& \lambda(A(x,t_1)).
\end{eqnarray*}
Together with (\ref{approximateentropy}), we see that
$\lambda(A(x,t_2))\leq \lambda(A(x,t_1)).$
\end{proof}

\begin{definition}
A homothetically shrinking soliton $A(x)$ is called entropy-stable if it is a local minimum of the entropy, among
all perturbations $\widetilde{A}(x)$ such that $||\widetilde{A}-A||_{C^1}$ is sufficiently small.
\end{definition}

In general the entropy $\lambda(A)$ is not attained by any $\mathcal{F}_{x_0,t_0}(A)$.
However if $A\in \mathcal{S}_{x_0,t_0}$ and $i_VF\neq 0$ for any $V\in \mathbb{R}^n$, we will show that
$\lambda(A)$ is attained exactly by $\mathcal{F}_{x_0,t_0}(A)$.
We first examine the geometric meaning of $i_VF=0$ in the case that $A(x)$ is a homothetically shrinking solition.

\begin{proposition}
If $A(x)$ is a homothetically shrinking soliton satisfying $i_VF=0$ for some non-zero vector $V$,
then $A(x)$ is defined on a hyperplane perpendicular to $V$.
\end{proposition}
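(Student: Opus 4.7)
The plan is to put $A$ into an axial gauge along $V$, i.e.\ a gauge in which $V^i A_i \equiv 0$, after which descent follows immediately from the hypothesis $i_V F = 0$. The argument is purely gauge-theoretic; it uses only the uniform bounds on $|\nabla^k A|$ built into the definition of $\mathcal{S}_{x_0,t_0}$, and not the soliton equation itself.

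After translating if necessary I may assume $V$ is a fixed nonzero vector and decompose $\mathbb{R}^n = V^\perp \oplus \mathbb{R}V$. For each $y \in V^\perp$ I would solve, along the line $t \mapsto y+tV$, the linear matrix ODE
\[
\partial_t\, g(y+tV) \;=\; -(V^i A_i)(y+tV)\, g(y+tV), \qquad g(y) \;=\; I,
\]
for $g$ valued in $G \subset SO(r)$. Because the coefficient $V^i A_i$ is smooth with uniformly bounded derivatives of all orders, this linear system has a unique global smooth $G$-valued solution that depends smoothly on $y$ as well, thereby producing a global gauge transformation $g : \mathbb{R}^n \to G$. By construction, the new gauge $\widetilde A := g^{-1} A g + g^{-1} dg$ satisfies $\widetilde A_V := V^i \widetilde A_i \equiv 0$.

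Gauge-covariance of the curvature, $\widetilde F = g^{-1} F g$, preserves the vanishing $V^i F_{ij} = 0$, so in the new gauge $V^i \widetilde F_{ij} \equiv 0$ as well. On the other hand, expanding directly in the axial gauge gives
\[
V^i \widetilde F_{ij} \;=\; \partial_V \widetilde A_j - \partial_j \widetilde A_V + [\widetilde A_V,\, \widetilde A_j] \;=\; \partial_V \widetilde A_j,
\]
so $\partial_V \widetilde A_j \equiv 0$ for every $j$. Hence $\widetilde A$ is the pull-back to $\mathbb{R}^n$ of a connection on the trivial $G$-bundle over $V^\perp$, which is the precise meaning of ``$A$ descends to $V^\perp$''. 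The only genuine technicality is the global existence and smoothness of the axial gauge transformation $g$; but the defining equation is linear in $g$ with a $C^\infty$-bounded coefficient along each $V$-line, so no blow-up can occur and the construction carries through routinely.
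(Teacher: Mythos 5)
Your argument is correct, and it is genuinely different from the paper's. The paper works in the \emph{exponential} (radial) gauge $x^j A_j \equiv 0$, which is the natural gauge for a homothetically shrinking soliton, and then exploits the self-similar structure: writing out the induced Yang-Mills flow $A(x,t)=\lambda A(\lambda x,\lambda^2(t-1)+1)$, it uses $J_l = \frac{1}{2}x^j F_{jl}=0$ to see $\partial_t A_l=0$, then lets $\lambda\to 0$ in the scaling identity to force $A_l\equiv 0$, and finally reads off $\partial_l A_j = F_{lj}=0$. Your proof instead installs an \emph{axial} gauge $V^i\widetilde A_i\equiv 0$ by integrating a linear ODE along each $V$-line (a path-ordered exponential of $-A_V$), after which $V^i\widetilde F_{ij}=\partial_V\widetilde A_j$ and the hypothesis $i_V F=0$ gives descent immediately. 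This is shorter and, as you note, does not invoke the soliton equation at all: it shows that \emph{any} smooth connection with $i_V F=0$ on $\mathbb{R}^n$ descends to $V^\perp$ after gauging, whereas the paper's route is tied to the shrinker structure. Two small remarks. First, global existence of $g$ needs no growth hypothesis on $A$ — linearity alone suffices — so even the appeal to the $C^k$-bounds in $\mathcal{S}_{x_0,t_0}$ is unnecessary; you do need smooth dependence on the initial point $y\in V^\perp$, which is standard for linear ODEs with smooth coefficients. Second, it is worth observing that $g$ stays in $G$: since $A_V$ is $\mathfrak{g}$-valued, the flow $\dot g=-A_V g$ preserves the subgroup $G\subset SO(r)$, so $\widetilde A$ is again a $G$-connection. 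With those points noted, the proposal is a clean and slightly more general proof than the one in the paper.
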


\begin{proof}
Without loss of generality we assume $A(x)$ is centered at $(0,1)$ and let $A(x,t)$ be the homothetically
shrinking Yang-Mills flow with $A(x,0)=A(x)$.
In the exponential gauge, i.e. a gauge such that $x^jA_j(x)=0$, we have for any $t<1$ and $\lambda>0$ that
\begin{equation}\label{A(x,t)}
A_j(x,t)=\lambda A_j(\lambda x,\lambda^2(t-1)+1)=\frac{1}{\sqrt{1-t}}A_j(\frac{x}{\sqrt{1-t}},0)=\frac{1}{\sqrt{1-t}}A_j(\frac{x}{\sqrt{1-t}})
\end{equation}
and
\begin{equation}\label{F(x,t)}
F_{ij}(x,t)=\frac{1}{1-t}F_{ij}(\frac{x}{\sqrt{1-t}}).
\end{equation}
Moreover the exponential gauge is uniform for all $t<1$, i.e. $x^jA_j(x,t)=0$.

By assumption we have $i_VF(x)=0$. For simplicity let $V=\frac{\partial}{\partial x^l}$. Then by (\ref{F(x,t)}) we have
$$F_{jl}(x,t)=0, \quad \forall j.$$
Note that $A(x,t)$ is a homothetically shrinking Yang-Mills flow, hence
$$J_l(x,t)=\frac{1}{2(1-t)}x^jF_{jl}(x,t)=0.$$
Then
$$\frac{\partial}{\partial t}A_l(x,t)=J_l(x,t)=0.$$
In particular,
$$A_l(x,t')=A_l(x,t), \quad \forall t,t'<1.$$
Then by (\ref{A(x,t)}), we have for any $\lambda>0$ and $t<1$ that
$$A_l(x,t)=\lambda A_l(\lambda x,\lambda^2(t-1)+1)=\lambda A_l(\lambda x,t).$$
Letting $\lambda\rightarrow 0$, we see that
\begin{equation}\label{Al}
A_l(x,t)= 0.
\end{equation}
Note that
$$0=F_{l j}(x,t)=\partial_l A_j-\partial_jA_l+A_l A_j-A_jA_l=\partial_l A_j(x,t),$$
so for any $j$, we have
$$\partial_l A_j(x,t)=0$$
and
\begin{equation}\label{Aj}
A_j(x+cV,t)=A_j(x,t), \quad \forall c\in \mathbb{R}.
\end{equation}
For example if $V=\frac{\partial}{\partial x^n}$, then by (\ref{Al}) and (\ref{Aj}) we have
$$A(x^1,\cdots,x^{n-1},x^n,t)=A_1(x^1,\cdots,x^{n-1},0,t)dx^1+\cdots+A_{n-1}(x^1,\cdots,x^{n-1},0,t)dx^{n-1}.$$

In particular for $V=\frac{\partial}{\partial x^n}$ and in the exponential gauge, we have
\begin{equation}\label{splitting}
A(x^1,\cdots,x^{n-1},x^n)=A_1(x^1,\cdots,x^{n-1},0)dx^1+\cdots+A_{n-1}(x^1,\cdots,x^{n-1},0)dx^{n-1}.
\end{equation}
This means that $A(x)$ is defined on a hyperplane perpendicular to $V$,
i.e. $A(x)$ descends to a trivial $G$-vector bundle over a hyperplane $V^\perp$.
\end{proof}

The following Proposition is analogous to a corresponding result for self-shrinkers of the mean curvature flow,
see \cite{ColdingMinicozzi}.
We follow closely the arguments given in \cite{ColdingMinicozzi}.

\begin{proposition}\label{entropyobtained}
Let $A(x)$ be a homothetically shrinking soliton centered at $(0,1)$ such that $i_VF\neq 0$ for any non-zero $V$.
Then the function $(x_0,t_0)\mapsto \mathcal{F}_{x_0,t_0}(A)$ attains its strict maximum at $(0,1)$.
In fact for any given $\epsilon>0$, there exists a constant $\delta>0$ such that
\begin{equation}\label{strictmaximum}
\sup\{\mathcal{F}_{x_0,t_0}(A): |x_0|+|\log t_0|\geq \epsilon\}<\lambda(A)-\delta.
\end{equation}
In particular, the entropy of $A$ is achieved by $\mathcal{F}_{0,1}(A)$.
\end{proposition}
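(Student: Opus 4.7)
The strategy mirrors Colding--Minicozzi's treatment of self-shrinkers for mean curvature flow: the two essential inputs are the monotonicity formula (applied with $A$ viewed as the initial datum of the associated shrinking Yang--Mills flow) and the eigenfield identification from Proposition \ref{twoeigenfunctions}.

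First I will establish the global comparison $\mathcal{F}_{x_0,t_0}(A)\le \mathcal{F}_{0,1}(A)$, which already yields $\lambda(A)=\mathcal{F}_{0,1}(A)$. Let $A(x,t)$ denote the homothetically shrinking Yang--Mills flow with $A(\cdot,0)=A$, defined for $t<1$. Using $A(\cdot,t)=A^{\sqrt{1-t}}$ in an exponential gauge together with the rescaling invariance $\mathcal{F}_{cx_0,c^2t_0}(A^c)=\mathcal{F}_{x_0,t_0}(A)$ from Proposition \ref{invariantproperty}, one verifies
\[
\Phi_{x_0,t_0}\bigl(A(\cdot,t)\bigr)=\mathcal{F}_{x_0/\sqrt{1-t},\,(t_0-t)/(1-t)}(A),\qquad t<\min\{t_0,1\}.
\]
The monotonicity formula (\ref{monotonicityformula}) makes the left side non-increasing in $t$. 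As $t\to-\infty$ the parameter point converges to $(0,1)$, and the uniform bounds on $|\nabla^k A|$ give continuity of $\mathcal{F}$, so the limit is $\mathcal{F}_{0,1}(A)$. Comparing with $t=0$ yields the asserted bound.

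Next I would prove strict inequality for $(x_0,t_0)\ne(0,1)$. If equality held, the monotonicity derivative would have to vanish along the entire backward path, forcing $A(\cdot,t)$ to be a shrinker centered at $(x_0,t_0)$ for every $t\le 0$. Setting $t=0$ and subtracting the shrinker equation at $(0,1)$ yields the pointwise identity
\[
\bigl[(t_0-1)\,x+x_0\bigr]^p F_{pj}(x)\equiv 0.
\]
When $t_0=1$ this reads $i_{x_0}F\equiv 0$, so the nondescending hypothesis forces $x_0=0$, contradicting $(x_0,t_0)\ne(0,1)$. When $t_0\ne 1$ it rearranges to $J=\tfrac{1}{2(1-t_0)}\,i_{x_0}F$, exhibiting $J$ and $i_{x_0}F$ as proportional; but Proposition \ref{twoeigenfunctions} places them in distinct eigenspaces of $L$ (eigenvalues $-1$ and $-1/2$), so both must vanish. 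The resulting $J\equiv 0$ then makes the shrinker equation hold for every center $(0,\tau)$, $\tau>0$, and the argument at the end of Corollary \ref{gap} forces $F\equiv 0$, contradicting $i_V F\not\equiv 0$ for $V\ne 0$.

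Finally the uniform gap (\ref{strictmaximum}) will follow by contradiction. Suppose $(x_0^i,t_0^i)$ satisfies $|x_0^i|+|\log t_0^i|\ge\epsilon$ and $\mathcal{F}_{x_0^i,t_0^i}(A)\to\lambda(A)$; extract a subsequential limit in the natural compactification of $\mathbb{R}^n\times(0,\infty)$. A limit at an interior point contradicts the strict inequality above by continuity of $\mathcal{F}$. The tail $t_0^i\to 0^+$ is excluded by $\mathcal{F}_{x_0,t_0}(A)\le t_0^2\sup|F|^2\to 0$. The remaining tails $t_0^i\to\infty$ or $|x_0^i|\to\infty$ are handled by combining the prefactor decay $t_0^{2-n/2}$ (valid since $n\ge 5$ by Proposition \ref{notypeI0}) with the Chebyshev-type estimate $\int_{|x|\ge R}|F|^2 G_{0,1}\,dx\le\tfrac{2(n-4)}{R^2}\mathcal{F}_{0,1}(A)$ coming from Lemma \ref{identities}(a), via a direct Gaussian comparison between $G_{x_0^i,t_0^i}$ and $G_{0,1}$. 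This last case is the main technical obstacle: one has to show quantitatively that $\mathcal{F}_{x_0,t_0}(A)$ stays uniformly below $\lambda(A)$ as $(x_0,t_0)$ escapes to infinity in $\mathbb{R}^n\times(0,\infty)$.
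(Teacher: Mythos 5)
Your overall strategy is genuinely different from the paper's and is largely correct, but step~3 (the uniform gap) has a real hole.

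Steps 1 and 2 are correct and, in some ways, cleaner than the paper's proof. The paper establishes the local strict maximum via the second variation formula from Proposition~\ref{secondvariation0}, and then shows directly that $g(s)=\mathcal{F}_{x_s,t_s}(A)$ is nonincreasing along the explicit path $(x_s,t_s)=(sy,\,1+(T-1)s^2)$ by computing $g'(s)=-s\int_{\mathbb{R}^n}|(T-1)sX_j+t_sy^iF_{ij}|^2G_s\,dx\leq 0$. You instead obtain the global bound from the monotonicity formula applied to the shrinking flow (your identity $\Phi_{x_0,t_0}(A(\cdot,t))=\mathcal{F}_{x_0/\sqrt{1-t},\,(t_0-t)/(1-t)}(A)$ is correct), and obtain the strict inequality by noting that equality would force $[(t_0-1)x+x_0]^pF_{pj}\equiv 0$, which you then rule out via the eigenvalue identification $LJ=J$ and $Li_{x_0}F=\tfrac12 i_{x_0}F$. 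That second step is a nice use of Proposition~\ref{twoeigenfunctions} that the paper does not make; the paper instead gets strictness for free from the negative-definite Hessian. Both are valid.

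The gap is in step~3, and you have correctly flagged where it is. Your proposed ``direct Gaussian comparison between $G_{x_0^i,t_0^i}$ and $G_{0,1}$'' together with the Chebyshev estimate does not go through: for a shrinker one only knows $|F|\leq C$ uniformly, not that $|F|$ decays, so $\int_{\mathbb{R}^n}|F|^2\,dx$ may well diverge, and for $|x_0|$ large with $t_0$ comparable there is no constant $C$ with $t_0^{2-n/2}e^{-|x-x_0|^2/(4t_0)}\leq C\,e^{-|x|^2/4}$ on all of $\mathbb{R}^n$. So the tails $t_0^i\to\infty$ and $|x_0^i|\to\infty$ are \emph{not} controlled by what you have written, and the compactness argument is incomplete as it stands.

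The fix is already implicit in your step~1. Reparametrize your backward-flow curve by $s=(1-t)^{-1/2}\in(0,1]$: the family $(x_0/\sqrt{1-t},\,(t_0-t)/(1-t))$ becomes exactly $(s\,x_0,\,1+(t_0-1)s^2)$, which is precisely the paper's path from $(0,1)$ at $s=0$ to $(x_0,t_0)$ at $s=1$. The monotonicity formula says $\mathcal{F}$ is nonincreasing in $s$ along this path. Since $s\mapsto |s\,x_0|+|\log(1+(t_0-1)s^2)|$ is continuous, equals $0$ at $s=0$, and equals $|x_0|+|\log t_0|\geq\epsilon$ at $s=1$, the path crosses the compact set $S_{\epsilon'}=\{|y|+|\log T|=\epsilon'\}$ for any $\epsilon'\in(0,\epsilon]$. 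Hence $\mathcal{F}_{x_0,t_0}(A)\leq\sup_{S_{\epsilon'}}\mathcal{F}$, and by your step~2 together with continuity of $\mathcal{F}$ and compactness of $S_{\epsilon'}$, $\sup_{S_{\epsilon'}}\mathcal{F}<\lambda(A)-\delta$. This yields the uniform gap (\ref{strictmaximum}) with no escape-to-infinity estimates at all. I would replace your compactness argument with this push-back argument; then your proof is correct and distinct in flavor from the paper's.
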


\begin{proof}
We first show that $(0,1)$ is a local maximum of the function $(x_0,t_0)\mapsto \mathcal{F}_{x_0,t_0}(A)$.
That is to show
$$\mathcal{F}'_{0,1}(q,V,0)=0, \quad \forall q, V,$$
and
$$\mathcal{F}''_{0,1}(q,V,0)<0, \quad \forall (q,V)\neq (0,0).$$
In fact by the first variation formula (\ref{firstvariation}) and Lemma \ref{identities} (a, b), we have
$$\frac{d}{ds}|_{s=0}\mathcal{F}_{x_s,t_s}(A)=\mathcal{F}'_{0,1}(q,V,0)=0.$$
Let $x_s=sV, t_s=1+sq$. Note that $J\neq 0$, otherwise $F$ would be vanishing, as showed in the proof of Corollary \ref{gap},
which violates the assumption that $i_VF\neq 0$ for any non-zero $V$.
Then by the second variation formula (\ref{secondvariation}), we have for any $(q,V)\neq (0,0)$ that
$$\frac{1}{2}\mathcal{F}''_{0,1}(q,V,0)=-\int_{\mathbb{R}^n}(q^2|J|^2+\frac{1}{2}|i_VF|^2)Gdx< 0.$$

For any fixed $(y,T)$, where $y\in \mathbb{R}^n$ and $T>0$, we set
$$x_s=sy,\quad  t_s=1+(T-1)s^2.$$
Note that $(x_s,t_s), s\in [0,1]$, is a path from $(0,1)$ to $(y,T)$.
Let
$$g(s)=\mathcal{F}_{x_s,t_s}(A).$$
The remaining of the proof is to show that $g'(s)\leq 0$ for $s\in [0,1]$.

By the first variation formula (\ref{firstvariation}), we have
\begin{eqnarray*}
g'(s)&=&\int_{\mathbb{R}^n}\dot{t}_s(\frac{4-n}{2}t_s+\frac{1}{4}|x-x_s|^2)|F|^2G_s(x)dx
\\&&+\int_{\mathbb{R}^n}\frac{1}{2}t_s<\dot{x}_s,x-x_s>|F|^2G_s(x)dx.
\end{eqnarray*}
In the same way as in the proof of Lemma \ref{identities}, for vector fields $\varphi$ on $\mathbb{R}^n$ we have
\begin{eqnarray*}
&&\int_{\mathbb{R}^n}\varphi^p(x-x_s)^p|F|^2G_s(x)dx
\\&=&\int_{\mathbb{R}^n}[2t_s\partial_p(\varphi^p)|F|^2-4t_s\partial_i\varphi^pF_{pj\beta}^\alpha F_{ij\beta}^\alpha]G_s dx
\\&&-\int_{\mathbb{R}^n}4t_s\varphi^pF_{pj\beta}^\alpha (J_{j\beta}^\alpha -\frac{1}{2t_s}X_{j\beta}^\alpha)G_s dx,
\end{eqnarray*}
where $$X_{j\beta}^\alpha=(x-x_s)^pF_{pj\beta}^\alpha.$$
Taking $\varphi=\frac{\partial}{\partial x^p}$ and noting that $J_j=\frac{x^p}{2}F_{pj}$, we get
\begin{eqnarray*}
\int_{\mathbb{R}^n}(x-x_s)^p|F|^2G_s(x)dx
&=&-\int_{\mathbb{R}^n}4t_sF_{pj\beta}^\alpha (J_{j\beta}^\alpha -\frac{1}{2t_s}X_{j\beta}^\alpha)G_s dx,
\\&=&-\int_{\mathbb{R}^n}4t_sF_{pj\beta}^\alpha (\frac{1}{2}x^i -\frac{1}{2t_s}(x-x_s)^i)F_{ij\beta}^\alpha G_s dx.
\end{eqnarray*}
Taking $\varphi(x)=x-x_s$, we get
\begin{eqnarray*}
&&\int_{\mathbb{R}^n}|x-x_s|^2|F|^2G_sdx
\\&=&\int_{\mathbb{R}^n}[2(n-4)t_s|F|^2]G_s dx
-\int_{\mathbb{R}^n}4t_sX_{j\beta}^\alpha (J_{j\beta}^\alpha -\frac{1}{2t_s}X_{j\beta}^\alpha)G_s dx
\\&=&\int_{\mathbb{R}^n}[2(n-4)t_s|F|^2+2|X|^2]G_s dx
-\int_{\mathbb{R}^n}2t_sX_{j\beta}^\alpha x^i F_{ij\beta}^\alpha G_s dx.
\end{eqnarray*}
Hence we have
\begin{eqnarray*}
g'(s)&=&-\int_{\mathbb{R}^n}\frac{n-4}{2}t_s\dot{t}_s|F|^2G_s(x)dx
\\&&+\frac{1}{4}\dot{t}_s[\int_{\mathbb{R}^n}[2(n-4)t_s|F|^2+2|X|^2]G_s dx
-\int_{\mathbb{R}^n}2t_sX_{j\beta}^\alpha x^i F_{ij\beta}^\alpha G_s dx]
\\&&-t_sy^p\int_{\mathbb{R}^n}t_sF_{pj\beta}^\alpha (x^i -\frac{1}{t_s}(x-x_s)^i)F_{ij\beta}^\alpha G_s dx
\\&=&\frac{1}{2}\dot{t}_s[\int_{\mathbb{R}^n}|X|^2G_s dx
-\int_{\mathbb{R}^n}t_sX_{j\beta}^\alpha x^i F_{ij\beta}^\alpha G_s dx]
\\&&-t_sy^p\int_{\mathbb{R}^n}t_sF_{pj\beta}^\alpha (x^i -\frac{1}{t_s}(x-x_s)^i)F_{ij\beta}^\alpha G_s dx.
\end{eqnarray*}
Set $z=x-x_s=x-sy$. We have $x=z+sy$ and $X_j=z^iF_{ij}$. Then we get
\begin{eqnarray*}
g'(s)&=&\frac{1}{2}\dot{t}_s[\int_{\mathbb{R}^n}(1-t_s)|X|^2G_s dx
-\int_{\mathbb{R}^n}t_sX_{j\beta}^\alpha sy^i F_{ij\beta}^\alpha G_s dx]
\\&&-t_sy^p\int_{\mathbb{R}^n}t_sF_{pj\beta}^\alpha (z^i+sy^i -\frac{1}{t_s}z^i)F_{ij\beta}^\alpha G_s dx
\\&=&\frac{1}{2}\dot{t}_s[\int_{\mathbb{R}^n}(1-t_s)|X|^2G_s dx
-\int_{\mathbb{R}^n}t_sX_{j\beta}^\alpha sy^i F_{ij\beta}^\alpha G_s dx]
\\&&-t_sy^p\int_{\mathbb{R}^n}(t_s-1)F_{pj\beta}^\alpha X_{j\beta}^\alpha G_s dx
-t_s^2\int_{\mathbb{R}^n}sy^pF_{pj\beta}^\alpha y^iF_{ij\beta}^\alpha G_s dx
\\&=&\frac{1}{2}\dot{t}_s(1-t_s)\int_{\mathbb{R}^n}|X|^2G_s dx-(\frac{1}{2}s\dot{t}_st_s+t_s(t_s-1) )\int_{\mathbb{R}^n}<X_j,y^i F_{ij}> G_s dx
\\&&-st_s^2\int_{\mathbb{R}^n}|y^iF_{ij}|^2 G_s dx.
\end{eqnarray*}
For $t_s=1+(T-1)s^2$, we have
\begin{eqnarray*}
g'(s)&=&-s[(T-1)^2s^2\int_{\mathbb{R}^n}|X|^2G_s dx+2(T-1)st_s\int_{\mathbb{R}^n}<X_j,y^i F_{ij}> G_s dx
\\&&+t_s^2\int_{\mathbb{R}^n}|y^iF_{ij}|^2 G_s dx]
\\&=&-s\int_{\mathbb{R}^n}|(T-1)sX_j+t_sy^iF_{ij}|^2G_s dx
\\&\leq&0.
\end{eqnarray*}

\end{proof}

\section{entropy-stability and F-stability}

In this section we shall show that the entropy-stability of a homothetically shrinking soliton such that
$i_VF\neq 0$ for any non-zero $V$ implies F-stability.

\begin{theorem}
Let $A(x)$ be a homothetically shrinking soliton in $\mathcal{S}_{0,1}$ such that $i_VF\neq 0$ for any non-zero $V$.
If $A(x)$ is entropy-stable, then it is F-stable.
\end{theorem}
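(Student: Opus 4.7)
\emph{Plan.} I argue by contrapositive: assuming $A$ is not F-stable, I construct an arbitrarily $C^1$-small perturbation of $A$ whose entropy is strictly less than $\lambda(A)$, contradicting entropy-stability. By F-unstability, fix a compactly supported $\mathfrak{g}$-valued $1$-form $\theta_0$ with
$$ \max_{q\in\mathbb{R},\,V\in\mathbb{R}^n}\mathcal{F}''_{0,1}(q,V,\theta_0) \;<\; 0, $$
set $A_s = A + s\theta_0$ (so $\|A_s - A\|_{C^1} = O(s)$), and consider
$$ \Psi(s, x_0, t_0) \;:=\; \mathcal{F}_{x_0, t_0}(A_s) $$
on a neighborhood of $(0, 0, 1)$.

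At $s = 0$ two features of $\Psi$ are crucial. First, by Proposition \ref{entropyobtained}, $\Psi(0, \cdot, \cdot)$ has a strict global maximum $\lambda(A)$ at $(0, 1)$, with the quantitative gap (\ref{strictmaximum}). Second, taking $\theta = 0$ in (\ref{secondvariation}) gives
$$ \frac{1}{2}\mathcal{F}''_{0,1}(q, V, 0) \;=\; -\int_{\mathbb{R}^n}\Bigl(q^2|J|^2 + \frac{1}{2}|i_V F|^2\Bigr) G\, dx, $$
which is strictly negative for every $(q, V) \neq (0, 0)$: the hypothesis $i_V F \neq 0$ for $V \neq 0$ handles $V \neq 0$, and $J \not\equiv 0$ (needed when $q \neq 0$) holds since $F \not\equiv 0$, by the argument in the proof of Corollary \ref{gap}. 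Hence the $(x_0, t_0)$-Hessian of $\Psi(0, \cdot, \cdot)$ at $(0, 1)$ is strictly negative definite, and the implicit function theorem produces a smooth curve $s \mapsto (x(s), t(s))$ of critical, hence local-maximum, points of $\Psi(s, \cdot, \cdot)$ for $|s|$ small, with $(x(0), t(0)) = (0, 1)$.

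It remains to identify this local maximum with the global supremum, $\lambda(A_s) = \Psi(s, x(s), t(s))$, for small $s$. Since $\theta_0$ has compact support and $n \geq 5$ (by Proposition \ref{notypeI0}), the uniform bound $\sup_{x_0, t_0}\, t_0^2 \int_{\mathrm{supp}(\theta_0)} G_{x_0, t_0}\, dx < \infty$ forces $|\Psi(s, x_0, t_0) - \Psi(0, x_0, t_0)| = O(s)$ uniformly in $(x_0, t_0)$; combined with (\ref{strictmaximum}) this yields the desired identification. The soliton equation makes $\partial_s \Psi|_{(0, 0, 1)} = 0$ via the first variation formula, so $\tfrac{d}{ds}|_{s=0}\Psi(s, x(s), t(s)) = 0$, while a Schur complement computation using the negative-definite Hessian from the previous paragraph gives
$$ \frac{d^2}{ds^2}\bigg|_{s=0}\Psi(s, x(s), t(s)) \;=\; \max_{q, V}\mathcal{F}''_{0, 1}(q, V, \theta_0) \;<\; 0. $$
Thus $\lambda(A_s) < \lambda(A)$ for all sufficiently small $s \neq 0$, the required contradiction. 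The main obstacle is the local-to-global step in this last paragraph: both the strict maximum of Proposition \ref{entropyobtained} and the compact support of $\theta_0$ are needed to prevent the supremum defining $\lambda(A_s)$ from escaping to $(x_0, t_0) \to \infty$; once this is settled, the finite-dimensional Schur complement delivers the sign immediately.
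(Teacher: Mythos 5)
Your proof is correct, and it takes a genuinely different route from the paper's. Both arguments share the same raw ingredients --- compact support of $\theta_0$, the quantitative gap of Proposition~\ref{entropyobtained}, and strict negativity of the $(x_0,t_0)$-Hessian of the $\mathcal{F}$-functional at the center (which requires $J\neq 0$ and $i_VF\neq 0$ for $V\neq 0$) --- but they organize them differently. The paper works directly with $H(y,T,s)=\mathcal{F}_{y,T}(A_s)$ and shows $H(y,T,s)<H(0,1,0)$ for small $s\neq 0$ by a four-region case analysis: a small neighborhood of $(0,1)$ (via negative definiteness of the full $(y,T,s)$-Hessian at $(0,1,0)$), $|y|$ large, $|\log T|$ large, and the remaining compact annulus (via continuity of $\partial_s H$ and the gap). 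You instead invoke the implicit function theorem to track the unique nearby critical point $(x(s),t(s))$ of $\Psi(s,\cdot,\cdot)$, prove a uniform $O(s)$ bound on $\Psi(s,\cdot,\cdot)-\Psi(0,\cdot,\cdot)$ coming from compact support and $n\geq 5$, and then localize the supremum to conclude $\lambda(A_s)=\Psi(s,x(s),t(s))$. The Schur-complement observation --- that the second $s$-derivative of $\Psi(s,x(s),t(s))$ equals $\max_{q,V}\mathcal{F}''_{0,1}(q,V,\theta_0)$ --- is precisely the content of the paper's Step~1 in a different guise: the paper's claim that the full Hessian of $H$ is negative definite is equivalent to your statement that the Schur complement of the $(y,T)$-block is negative, which is in turn the F-unstability condition $\max_{q,V}\mathcal{F}''_{0,1}(q,V,\theta_0)<0$. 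What your version buys is a clean second-order asymptotic $\lambda(A_s)=\lambda(A)+\tfrac12 s^2\max_{q,V}\mathcal{F}''_{0,1}(q,V,\theta_0)+o(s^2)$ and a more conceptual packaging of the local step; what the paper's version buys is an entirely elementary and self-contained localization that avoids the implicit function theorem and spells out the far-field behavior in $y$ and $T$ separately. One small point worth recording if you write this out in full: the uniform bound $\sup_{x_0,t_0}t_0^2\int_{\mathrm{supp}\,\theta_0}G_{x_0,t_0}\,dx<\infty$ uses both $\int G_{x_0,t_0}=1$ (for $t_0$ bounded) and $G_{x_0,t_0}\leq (4\pi t_0)^{-n/2}$ together with $n\geq 5$ (for $t_0$ large), which is exactly the computation underlying the paper's Steps~2 and~3.
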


\begin{proof}
We argue by contradiction. Assume that $A(x)$ is F-unstable.
By the definition of F-stability there exists a $1$-parameter family of connections
$A_s(x), s\in [-\epsilon, \epsilon]$, with $\theta_s(x):=\frac{d}{ds}A_s(x)\in C_0^\infty$, such that
for any deformation $(x_s, t_s)$ of $(x_0=0, t_0=1)$, we have
\begin{equation}\label{Funstable}
\frac{d^2}{ds^2}|_{s=0}\mathcal{F}_{x_s,t_s}(A_s)<0.
\end{equation}
We start from this to show that $A$ is entropy-unstable.
Let
$$H: \mathbb{R}^n\times \mathbb{R}^+\times [-\epsilon,\epsilon],\quad H(y,T,s)=\mathcal{F}_{y,T}(A_s).$$
In fact we will show that there exists $\epsilon_0>0$ such that for $s$ with $0< |s|\leq \epsilon_0$,
\begin{equation}\label{lessentropy}
\sup_{y,T}H(y,T,s)<H(0,1,0).
\end{equation}
Hence for $s$ with $0< |s|\leq \epsilon_0$, $\lambda(A_s)<\lambda(A)$, which contradicts with our assumption.

\bigskip
Step 1. We prove that there exists $\epsilon_1>0$ such that for any $s$ with $0<|s|\leq \epsilon_1$,
\begin{equation}\label{nearbypoints}
\sup \{H(y,T,s): |y|\leq \epsilon_1, |\log T|\leq \epsilon_1\}<H(0,1,0).
\end{equation}
By the assumption that $A(x)\in \mathcal{S}_{0,1}$ and Corollary \ref{criticalofentropy}, we have
$$\nabla H(0,1,0)=0.$$
For any $y\in \mathbb{R}^n, a\in \mathbb{R}$ and $b\in \mathbb{R}$, $(sy,1+as,bs)$ is a curve
through $(0,1,0)$. In case of $b\neq 0$, by  (\ref{Funstable}) we have
\begin{eqnarray*}
\frac{d^2H}{ds^2}|_{s=0}(sy,1+as,bs)&=&\frac{d^2}{ds^2}|_{s=0}\mathcal{F}_{sy,1+as}(A_{bs})
\\&=&b^2\frac{d^2}{ds^2}|_{s=0}\mathcal{F}_{\frac{s}{b}y,1+a\frac{s}{b}}(A_{s})
\\&< &0.
\end{eqnarray*}
For $b=0$ and $(a,y)\neq (0,0)$, we have
\begin{eqnarray*}
\frac{d^2H}{ds^2}|_{s=0}(sy,1+as,0)&=&\frac{d^2}{ds^2}|_{s=0}\mathcal{F}_{sy,1+as}(A)
\\&=&-2\int_{\mathbb{R}^n}(a^2|J|^2+\frac{1}{2}|i_yF|^2)Gdx
\\&< &0,
\end{eqnarray*}
where we used the assumption that $i_yF\neq 0$ for $y\neq 0$ and its implication that $J\neq 0$.
Hence the Hessian of $H$ at $(0,1,0)$ is negative definite and $H$ has a local strict maximum at $(0,1,0)$.
Thus there exists $\epsilon_1\in (0,\epsilon]$ such that if $0<|y|+|\log T|+|s|\leq 3\epsilon_1$, then
$H(y,T,s)<H(0,1,0).$
In particular for any $s$ with $0<|s|\leq \epsilon_1$, we have
$$\sup \{H(y,T,s): |y|\leq \epsilon_1, |\log T|\leq \epsilon_1\}<H(0,1,0).$$

\bigskip

Step 2. We prove that there exists $R_0>0$ such that
\begin{equation}\label{largey}
\sup_{T,s} H(y,T,s)<H(0,1,0), \quad \text{for} \quad |y|\geq R_0.
\end{equation}
Denote the support of $\theta_s$ by $\Omega_s$ and $\Omega=\bigcup_{s\in [-\epsilon,\epsilon]}\Omega_s$.
Then on $\mathbb{R}^n\setminus\Omega$, $F_s=F$. Hence
\begin{eqnarray*}
H(y,T,s)&=&T^2\int_{\Omega}|F_s|^2(4\pi T)^{-\frac{n}{2}}e^{-\frac{|x-y|^2}{4T}}dx
+T^2\int_{\mathbb{R}^n\setminus\Omega}|F|^2(4\pi T)^{-\frac{n}{2}}e^{-\frac{|x-y|^2}{4T}}dx
\\&\leq &T^2\int_{\Omega}|F_s|^2(4\pi T)^{-\frac{n}{2}}e^{-\frac{|x-y|^2}{4T}}dx+H(y,T,0).
\end{eqnarray*}
Note that for $|y|\geq 1$, there exists $\delta>0$ such that $H(y,T,0)\leq H(0,1,0)-\delta$, see Proposition \ref{entropyobtained}.
Let $M=\sup\{|F_s(x)|^2: s\in [-\epsilon,\epsilon], x\in \mathbb{R}^n\}$,
$D=\sup_{x\in\Omega}|x|$ and $|\Omega|=\int_\Omega dx$. Then for $|y|\geq D+R$ with $R\geq 1$, we have
\begin{eqnarray*}
H(y,T,s)&\leq & M|\Omega|T^2(4\pi T)^{-\frac{n}{2}}e^{-\frac{R^2}{4T}}+H(0,1,0)-\delta.
\end{eqnarray*}
Let $f(r)=r^{-\frac{n-4}{2}}e^{-\frac{1}{4r}}$, $r>0$, which is uniformly bounded.
Note that $n\geq 5$.
Then as $R\rightarrow \infty$, $T^{\frac{4-n}{2}}e^{-\frac{R^2}{4T}}=f(\frac{T}{R^2})R^{4-n}\rightarrow 0$, uniformly in $T>0$.
Hence we can choose sufficiently large $R$ such that for $|y|\geq D+R:=R_0$, we have
$H(y,T,s)\leq H(0,1,0)-\frac{\delta}{2}$.

\bigskip
Step 3. We prove that exists $T_0>0$ such that
\begin{equation}\label{largelogT}
\sup_{y,s}H(y,T,s)<H(0,1,0), \quad \text{for} \quad |\log T|\geq T_0.
\end{equation}
We first consider the case that $T$ is large. Note that for any $T>0$,
\begin{eqnarray*}
H(y,T,s)&=&T^2\int_{\Omega}|F_s|^2(4\pi T)^{-\frac{n}{2}}e^{-\frac{|x-y|^2}{4T}}dx
+T^2\int_{\mathbb{R}^n\setminus\Omega}|F|^2(4\pi T)^{-\frac{n}{2}}e^{-\frac{|x-y|^2}{4T}}dx
\\&\leq &T^2\int_{\Omega}|F_s|^2(4\pi T)^{-\frac{n}{2}}e^{-\frac{|x-y|^2}{4T}}dx+H(y,T,0)
\\&\leq& M|\Omega|T^2(4\pi T)^{-\frac{n}{2}}+H(y,T,0).
\end{eqnarray*}
By Proposition \ref{entropyobtained}, there exists $\delta>0$ such that $H(y,T,0)\leq H(0,1,0)-\delta$ when $T\geq 2$.
Hence there exists $T_1\geq 2$ such that
\begin{equation}\label{largeT}
H(y,T,s)\leq H(0,1,0)-\frac{\delta}{2}, \quad \text{for} \quad T\geq T_1.
\end{equation}
Note that $M=\sup\{|F_s(x)|^2: s\in [-\epsilon,\epsilon], x\in \mathbb{R}^n\}$. Hence for any $T>0$, we have
$$H(y,T,s)=\mathcal{F}_{y,T}(A_s)=T^2\int_{\mathbb{R}^n}|F_s|^2(4\pi T)^{-\frac{n}{2}}e^{-\frac{|x-y|^2}{4T}}dx\leq MT^2.$$
Thus there exists $T_2>0$ such that
\begin{equation}\label{smallT}
\sup_{y,s}H(y,T,s)<H(0,1,0), \quad \text{for} \quad T\leq T_2.
\end{equation}
Combing (\ref{largeT}) and (\ref{smallT}), we get (\ref{largelogT}).

\bigskip
Step 4. Set $$U=\{(y,T): |y|\leq R_0, |\log T|\leq T_0\}\setminus \{(y, T): |y|<\epsilon_1, |\log T|<\epsilon_1\}.$$
We now prove that there exists $\epsilon_0\leq \epsilon_1$  such that for any $s$ with $|s|\leq \epsilon_0$,
\begin{equation}\label{compactset}
\sup\{H(y,T,s): (y,T)\in U  \}<H(0,1,0).
\end{equation}
Note that $U$ is a compact set which does not contain $(0,1)$.
By Proposition \ref{entropyobtained}, there exists $\delta>0$ such that
$$\sup_UH(y,T,0)\leq H(0,1,0)-\delta.$$
By the first variation formula (\ref{firstvariation}) of the $\mathcal{F}$-functional, we have
$$\frac{d}{d s}H(y,T,s)=-2T^2\int_{\mathbb{R}^n}<J(A_s)-\frac{1}{2T}i_{x-y}F(A_s),\theta_s>G_{y,T}(x)dx.$$
Since $\theta_s$ is compactly supported,  $\partial_sH$ is continuous in all three variables $y, T$, and $s$.
Therefore there exists $0<\epsilon_0\leq \epsilon_1$ such that if $|s|\leq \epsilon_0$, then
$$\sup_UH(y,T,s)\leq H(0,1,0)-\frac{\delta}{2}.$$
This proves (\ref{compactset}).
Combining (\ref{nearbypoints}), (\ref{largey}), (\ref{largelogT}) and (\ref{compactset}), we get (\ref{lessentropy}) and complete the proof.

\end{proof}

\end{document}